\documentclass[10pt]{article}
\usepackage{amssymb, amsmath, latexsym, a4}
\usepackage[latin1]{inputenc}
\usepackage{graphicx}
\usepackage[all]{xy}
\usepackage[usenames]{color}
\usepackage{pgf,tikz}

\usepackage{mathrsfs}

\usetikzlibrary{arrows}

\newenvironment{proof}[1][Proof]{\noindent\textbf{#1.} }{\ \rule{0.5em}{0.5em}}

\newtheorem{De}{Definition}[section]
\newtheorem{Th}[De]{Theorem}
\newtheorem{Pro}[De]{Proposition}

\newtheorem{Co}[De]{Corollary}
\newtheorem{Rem}[De]{Remark}
\newtheorem{Ex}[De]{Example}

\def\d{\delta}

\newbox\pullbackbox
\setbox\pullbackbox=\hbox{\xy 0;<1mm,0mm>: \POS(4,0)\ar@{-} (0,0)
\ar@{-} (4,4)
\endxy}

\newdir{>>}{{}*!/3.5pt/:(1,-.2)@^{>}*!/3.5pt/:(1,+.2)@_{>}*!/7pt/:(1,-.2)@^{>}*!/7pt/:(1,+.2)@_{>}}
\newdir{ >>}{{}*!/8pt/@{|}*!/3.5pt/:(1,-.2)@^{>}*!/3.5pt/:(1,+.2)@_{>}}
\newdir{ |>}{{}*!/-3.5pt/@{|}*!/-8pt/:(1,-.2)@^{>}*!/-8pt/:(1,+.2)@_{>}}
\newdir{ >}{{}*!/-8pt/@{>}}
\newdir{>}{{}*:(1,-.2)@^{>}*:(1,+.2)@_{>}}
\newdir{<}{{}*:(1,+.2)@^{<}*:(1,-.2)@_{<}}

\begin{document}

\title{Differential graded Hopf algebra structure on free symmetric cosimplicial operads.}
\author{Calvin Tcheka$^{1},$ Batkam Mbatchou V. Jacky III$^{2}$,
 Guy R. Biyogmam$^{3}$ } \maketitle

\bigskip
\centerline{$^{2}$ Department of Mathematics, Faculty of
science-University of Dschang} \centerline{Campus Box (237)67
Dschang, Cameroon} \centerline{ {E-mail address}:
calvin.tcheka@univ-dschang.org, jtchekacalvin@gmail.com}

\bigskip

\centerline{$^{1}$ Department of Mathematics, Faculty of
    science-University of Dschang} \centerline{Campus Box (237)67
    Dschang, Cameroon} \centerline{ {E-mail address}: batkamjacky3@yahoo.com }

\bigskip

\centerline{$^{3}$ Department of Mathematics, Georgia College and
State University, USA.} \centerline{Campus Box 17, Milledgeville}
\centerline{ {E-mail address}: guy.biyogmam@gcsu.edu}
\bigskip
\date{}

\centerline{\bf Abstract} Motivated by the recent work of
Batkam-Tcheka on pointed multiplicative operads, we construct in
this paper new chain complex algebras and two distinct bicomplex
algebra structures on a free symmetric connected multiplicative differential
graded operad. Furthermore, we focus on the non-differential graded
case and construct a differential graded Hopf algebra structure
using the odot product together with an analogue of the
Alexander-Whitney homomorphism and a compatible differential. As a
consequence, we extend the Malvenuto-Reutenauer result by showing
that every free symmetric connected multiplicative operad naturally
carries a differential graded Hopf algebra structure.
\bigskip

{\bf Keywords:}    free symmetric connected multiplicative
differential graded operad, differential graded Hopf algebra,
Alexander-Whitney homomorphism, odot product, bicomplex algebra,
convolution algebra.

\bigskip

 {\bf MSC:} 16N60, 18N50, 18G31, 19D23, 18M05, 57T99.
\bigskip

{\bf Acknowledgements} \\
Authors are grateful to Fr\'ed\'eric Patras for many useful
discussions. The first author is grateful to the hospitality of
"Max-Planck-Institut f$\ddot{u}$r Mathematik" in
Bonn(MPIM-Bonn-2026), where this paper was completed.
\bigskip
\section{ Introduction}
 Operads are objects habitually used to describe algebraic structures in monoidal categories.
 They are very important in categories with a good notion of homotopy where they
are useful for the study of homotopy invariant algebraic structures
and hierarchies of higher homotopies. Although the premises of
operads can be found in the paper of Lazard (\cite{La}) entitled group
laws and analyzers, operads were introduced in the 1970s at roughly
the same time by May (\cite{May}) and by Boardman and Vogt (\cite{B. V.})
within the context of algebraic topology and the study of loop
spaces. This theory was profoundly revitalized when Ginzburg and
Kapranov (\cite{G-K}) realized that a duality phenomenon observed by
Kontsevich in the study of graph complexes (\cite{Kont}) could be
formalized in terms of an operadic duality relation, namely Koszul duality.
This development systematically paved the way for the application of operads in
algebra.
 Indeed, the classical categories of algebras: commutative
algebras, associative algebras, and Lie algebras are associated with
operads respectively denoted by {\it Com, As} and {\it Lie.}
Furthermore the operads {\it Com} and {\it Lie} are Koszul duals of
one another within the theory of Ginzburg-Kapranov, while the operad
{\it As} is self-dual. This result provided a general framework for
interpreting the correspondences between the
multiplicative structures arising in the homology of commutative algebras
and those appearing in the homology of Lie algebras. Such correspondences had notably
been exploited in the construction of models for the rational homotopy of spaces.\\
In this work, we will mainly be concerned with differential graded
operads (dg-operads for short), $((\mathcal{O}(n))_{r},
 (\mathcal{O}(n))_{r}\stackrel{d^{\mathcal{O}(n)}_{r}}\longrightarrow(\mathcal{O}(n))_{r-1})_{r\geq 0},$
which describes many familiar algebraic structures, such as
commutative, associative, Lie and Poisson algebras. Assuming that
these dg-operads are free (see subsection 2.3), connected and
multiplicative (see Definition 2.6); and considering
$$\begin{array}{ccc}
        (\mathcal{O}(n))_{r}& \stackrel{\partial^{r}_{n}}\longrightarrow & (\mathcal{O}(n-1))_{r}\\
       x & \longmapsto &
         \partial^{r}_{n}=\sum\limits_{i=1}^n(-1)^{i} s_{i}^{n-1},
    \end{array}$$
 the boundary operator and
        $$\begin{array}{ccc}
        (\mathcal{O}(n))_{r}& \stackrel{\delta^{r}_{n}}\longrightarrow & (\mathcal{O}(n+1))_{r}\\
       x & \longmapsto &
        \delta^{r}_n=\sum\limits_{i=0}^{n+1}(-1)^i D_i^{n+1},
    \end{array}$$
    the coboundary operator  as well as the $\odot$ product, all deriving either from the multiplicative property or the connectedness property of
    $((\mathcal{O}(n))_{r}, d^{\mathcal{O}(n)}_{r}),$
    we establish that the triples $(((\mathcal{O}(n))_{r},
d^{\mathcal{O}(n)}_{r}),
\partial, \odot)$ and $(((\mathcal{O}(n))_{r},
d^{\mathcal{O}(n)}_{r}), \delta, \odot)$ are bicomplex algebras
while the pair $(((\mathcal{O}(n))_{r}, d^{\mathcal{O}(n)}_{r}),
\odot)$ is a chain complex algebra.\\
Besides the $\odot$ product, we define a new product
$$\begin{array}{ccc}
    (\mathcal{O}(p) \otimes \mathcal{O}(q))_{r} = \bigoplus\limits_{u + v = r}(\mathcal{O}(p))_{u}\otimes (\mathcal{O}(q))_{v} &
    \stackrel{\nabla}\longrightarrow &(\mathcal{O}(p+q))_{r} \\
    x\otimes y & \longmapsto & x\nabla y= \sum\limits_{\sigma\in \Sigma_{(p,q)}}sgn(\sigma)(x\odot
    y)\cdot\sigma^{-1},
 \end{array}$$
called the twisted odot product. This product endows  $((\mathcal{O}(n))_{r},
d^{\mathcal{O}(n)}_{r})$ with a differential graded algebra
structure, which we call a twisted  differential graded algebra. We emphasize
here that this twisted  differential graded algebra extends the
Malvenuto-Reutenauer twisted algebra structure on the associative
operad, $\mathcal{A}_{ss}.$\\
We end our construction by defining a differential graded Hopf
algebra structure on a free symmetric cosimplicial operad. This construction leads naturally to several algebraic structures on
$\mbox{End}(\mathcal{O}(n)),$ including convolution algebra and
Lie algebra structures.\\
This work is organized as follows: In section 2, we provide general
recollections on free symmetric cosimplicial dg operads. In section
3, we recall some result developed in \cite{B-T} and define our new
algebraic structures. In section 4, we focus on the non-differential
graded case and construct a differential graded Hopf algebra
structure using the odot product together with an analogue of the
Alexander-Whitney homomorphism and a compatible differential.
Finally, Section 5 is devoted to applications.

 \section{Preliminaries}
 The goal of this section is to recall some useful definitions, preliminary results and properties concerning operads
 over an additive symmetric monoidal category $\mathcal{C}$ with countable sums and quotients by finite
 group actions on its objects. As a guiding example, the reader may keep in mind the category of vector spaces, $Vect_\mathbb{K},$
 over an arbitrary ground field $\mathbb{K}.$ Let $\sum=\{\sum_n, n\in\mathbb{N}\}$ denote the set of
 symmetric groups. An operad is said to be symmetric ($\sum$-operad for short), or simply an operad, if it is endowed with a right
 action of $\sum.$ Otherwise, it is called non-symmetric (non-$\sum$-operad, abbreviated ns-operad).\\For more details, reader may refer to [\cite{M-V-B},\cite{May},\cite{L-V}]
 \subsection{Substitution in permutations}
 Let n,m be two nonzero integers, $\alpha=(\alpha_1,\cdots,\alpha_n)\in\sum_n$ and $\beta=(\beta_1,\cdots,\beta_m)\in\sum_m$. For $i\in [n]=\{1,2,\cdots,n\}$, the substitution of $\beta$ at the i-th position of $\alpha$ is the permutation denoted $sub_i(\alpha,\beta)\in\sum_{n+m-1}$ and defined by $sub_i(\alpha,\beta):=\theta,$ such that
 $\theta=(\theta_1,\cdots,\theta_{i-1},\alpha_i+\beta_1-1,\cdots,\alpha_i+\beta_m-1,\theta_{i+1},\cdots,\theta_n)$ with
 $$ \theta_j=\left\{\begin{array}{lll}
     \alpha_j\quad \mbox{if} \quad \alpha_j<\alpha_i \\
    \alpha_j+m-1 \quad \mbox{otherwise}
 \end{array}
 \right.$$
\begin{Ex}
    Consider $\alpha=(4,8,2,5,7,1,3,6)\in\sum_8$ and $\beta=(3,5,1,2,4)\in\sum_5$. The substitution of $\beta$ at the $4$-th position of $\alpha$
    is:\\
    $ sub_4(\alpha,\beta)=(4,12,2,7,9,5,6,8,11,1,3,10)\in \sum_{12} $
\end{Ex}
 \subsection{Operad}
 Besides the classical definition of an operad based on a single total composition map $\gamma$, there is an alternative and equivalent axiomatic approach, extensively developed by Markl (see e.g  \cite{Ma}). This approach is based on more elementary operations, known as the partial composition, and is often more suitable for combinatorial and homological algebra. Moreover, the partial composition, whose definition will be recalled below, takes advantage of the fact that it suffices to understand the composition of two operations in order to describe the entire structure.
  \begin{De}(Partial composition of  $\sum$-differential graded operads)\\
     A $\sum$-dg operad ($\Sigma$-dg-operad for short) is a collection of right
        $\sum$-differential positive graded $\mathbb{K}$-vector
        spaces,
        $$\{(\mathcal{O}(n))_{k\geq 0}, (\mathcal{O}(n))_{k}\stackrel{d_{k}^{\mathcal{O}}}\longrightarrow (\mathcal{O}(n))_{k-1}: n\geq 0 \} $$ equipped with a unit
        element $1_{\mathcal{O}}\in(\mathcal{O}(1))_{0}$, and a family of partial composition maps: for $m, n, l, r\in \mathbb{N}, i\in [n]$,
        $$\begin{array}{ccc}
            (\mathcal{O}(n))_{r} \otimes (\mathcal{O}(m))_{l} & \stackrel{\circ_i}\longrightarrow & (\mathcal{O}(n+m-1))_{r+l}\\
            x \otimes y& \longmapsto & x\circ_i y.
        \end{array}$$
        These maps are subject to the following axioms:
   %==============================================================================================================================================================
        \begin{enumerate}
            \item[(i)] Associative property: for $x\in(\mathcal{O}(n))_{u}$, $y\in(\mathcal{O}(m))_{v}$, $z\in(\mathcal{O}(l))_{r}$, the following relations hold
            $$\left\{\begin{array}{lll}
                (x\circ_i y)\circ_{i+j-1}z=x\circ_i (y\circ_jz), \quad \mbox{for} \quad i\in [n],~ j\in [m]\\
                \\
                (x\circ_i y)\circ_{k+m-1}z=(x\circ_k z)\circ_i y, \quad \mbox{for} \quad  1\le i<k\le n;
            \end{array}
            \right.$$
            \item[(ii)] Unit property: the unit $1_{\mathcal{O}}\in(\mathcal{O}(1))_{0}$ verifies the following identities: for $x\in(\mathcal{O}(n))_{r}$,
            $n\ge 1$,
            $$x\circ_i1_{\mathcal{O}}= x =1_{\mathcal{O}}\circ_1 x,~~~for ~~~i\in [n] ~~~and ~~~ d^{\mathcal{O}}_{0}(1_{\mathcal{O}})= 0$$
            \item[(iii)] Equivariance property: for $x\in(\mathcal{O}(n))_{l}$, $y\in(\mathcal{O}(m))_{r}$,
             $\sigma\in \sum_n$, $\tau\in \sum_m$, the following identity holds
            $$(x.\sigma)\circ_i(y.\tau)=(x\circ_{\sigma(i)}y)\cdot sub_i(\sigma,\tau),~~~for ~~~i\in [n].$$
            \item[(iv)] For $x\in(\mathcal{O}(n))_{u}$,
            $y\in(\mathcal{O}(m))_{v}$, $\sigma\in \sum_n,$ the following
            relation holds: $$d_{u+v}^{\mathcal{O}}(x\circ_i
            y)= d_{u}^{\mathcal{O}}(x)\circ_i y + (-1)^{|x|} x\circ_i
            d_{v}^{\mathcal{O}}(y), ~d_{u}^{\mathcal{O}}(x\cdot \sigma )= d_{u}^{\mathcal{O}}(x)\cdot
\sigma.$$
    \end{enumerate}
 \end{De}
 Observes that in the above definition, each $\circ_i$ describes the fundamental insertion action of the operad element $y\in\mathcal{O}(m)$ into the $i$-th
 position($i\in [n]$) of the operad element $x\in\mathcal{O}(n)$. This approach decomposes the complex total composition of operads, recalled below, into their atomic building blocks.
 \begin{De}(Total Composition of $\sum$-dg operad)\\
     A $\Sigma$-dg-operad is a collection of right
        $\sum$-differential graded $\mathbb{K}$-vector spaces   $$\{(\mathcal{O}(n))_{k\geq 0}, (\mathcal{O}(n))_{k}\stackrel{d_{k}^{\mathcal{O}}}
        \longrightarrow (\mathcal{O}(n))_{k-1}: n\geq 0 \} $$ together with a composition product:
        $$\begin{array}{ccc}
            (\mathcal{O}(k))_{u} \otimes (\mathcal{O}(n_{1}))_{v_{1}} \otimes \cdots \otimes (\mathcal{O}(n_{k}))_{v_{1}}&
            \stackrel{\gamma^{\mathcal{O}}}\longrightarrow & (\mathcal{O}(\sum\limits_{j=1}^{k} n_{j}))_{u+ \zeta}\\
            x \otimes x_1 \otimes \cdots \otimes x_k & \longmapsto &
            \gamma^{\mathcal{O}}(x; x_1,\cdots , x_k) \quad \mbox{with} \quad \zeta= \sum\limits_{j=1}^{k}v_{j}
        \end{array}$$
        which satisfies the associativity, the unity, the equivariance properties (see \cite{M-V-B} and \cite{B-T} for explicit details on these properties)
        and
 $$ \begin{array}{ccc} d_{u+\zeta}^{\mathcal{O}}(\gamma^{\mathcal{O}}(x; x_1,\cdots , x_k))&= &\gamma(d_{u}^{\mathcal{O}}(x); x_1,\cdots , x_k) \\
        & +& \sum\limits_{j=1}^{s-1}(-1)^{u+\zeta^{\prime}}\gamma^{\mathcal{O}}(x; x_1,\cdots
        x_{s-1},d_{v_{s}}^{\mathcal{O}}(x_{s}), x_{s+1}\cdots,
        x_k) \\
        \quad \mbox{with} \quad \zeta^{\prime}= \sum\limits_{j=1}^{s-1}v_{j}& &
         \end{array}$$
 \end{De}
\begin{Rem}
    The partial and the total compositions of $\sum$-dg operads are related as follows:
    \begin{enumerate}
        \item[(a)] From partial composition to total composition:  for  $x\in(\mathcal{O}(n))_{u}$, $y\in(\mathcal{O}(m))_{v}$, and $i\in [n]$, one has:
        $$x\circ_iy=\gamma^\mathcal{O}(x;\overbrace{id,\cdots,id,\underbrace{y}_{i\mbox{-}th position},\cdots,id}^{n\mbox{-}tuple})$$
        \item[(b)] From total composition to partial composition: for  $x\in(\mathcal{O}(n))_{u}$, $y_j\in(\mathcal{O}(n_j))_{v}$, and $j\in [n]$, one has:
        $$\gamma^\mathcal{O}(x;y_1,\cdots,y_n)=(\cdots((x\circ_ny_n)\circ_{n-1}y_{n-1})\cdots)\circ_1y_1$$
    \end{enumerate}
\end{Rem}
 \begin{De}(Homomorphism of $\sum$-dg operads)\\
     Let  $((\mathcal{O}(n))_{k\geq 0}, d^{\mathcal{O}}) $ and $((\mathcal{O^{\prime}}(n))_{k\geq 0}, d^{\mathcal{O^{\prime}}})$
     be two $\Sigma$-dg operads
        with respective associated units $1_\mathcal{O}$ and
        $1_{\mathcal{O^{\prime}}}$. A homomorphism of $\Sigma$-dg operads
        $((\mathcal{O}(n))_{k\geq 0}, d^{\mathcal{O}})\stackrel{f}\longrightarrow((\mathcal{O^{\prime}}(n))_{k\geq 0}, d^{\mathcal{O^{\prime}}})$
        is a collection $$\{((\mathcal{O}(n))_{u},d^{\mathcal{O}})
\stackrel{f_n^{u}}\longrightarrow((\mathcal{O^{\prime}}(n))_{u},
d^{\mathcal{O^{\prime}}})\}_{n\geq 0}^{u\geq 0}$$ of
        $\sum$-differential graded $\mathbb{K}$-vector space homomorphisms such that:
        \begin{enumerate}
            \item[(a)] $f_{1}^{0}(1_{\mathcal{O}})= 1_{\mathcal{O}^{\prime}}$;
            \item[(b)] for $x\in(\mathcal{O}(n))_{u}$, $y\in(\mathcal{O}^{\prime}(m))_{v},$
            $f_{n+m-1}^{u+v}(x\circ_i^{\mathcal{O}}y)= f_n^{u}(x)\circ_i^{\mathcal{O}^{\prime}} f_{m}^{v}(y)$
            with $\circ_i^{\mathcal{O}}$ and $\circ_i^{\mathcal{O}^{\prime}}$, the respective partial compositions in $\mathcal{O}$ and $\mathcal{O}^{\prime}$;
            \item[(c)] $f_n^{u}(x\cdot\sigma)= f_n^{u}(x)\cdot\sigma,$ for some $x\in(\mathcal{O}(n))_{u}$, $\sigma\in S_n$.
            \item[(d)] $f_n^{u-1}\circ d_{u}^{\mathcal{O}(n)}=
            d_{u}^{\mathcal{O}^{\prime}}\circ f^{u}_{n},$
 for $n\geq 0, u\geq 0.$
 \end{enumerate}
 \end{De}

 \begin{De}
    \begin{enumerate}
        \item A $\Sigma$-dg operad, $((\mathcal{O}(n))_{k\geq 0}, d^{\mathcal{O}}),$ is said to be
        \textbf{multiplicative} if there exists a zero-degree two-inputs element $\mu \in(\mathcal{O}(2))_{0}$
        such that $\mu \circ^{\mathcal{O}}_1 \mu= \mu\circ^{\mathcal{O}}_2 \mu$ (\cite{G-V}) and $d_{0}(\mu)= 0$.
        \item Let $((\mathcal{O}(n))_{k\geq 0}, d^{\mathcal{O}})$ be a $\Sigma$-dg operad and
        denote by $(\mathcal{O}(0))_{k}$, for some
        $k,$ the parametrization of the $k$-degree, 0-arity operations.  $((\mathcal{O}(n))_{k\geq 0}, d^{\mathcal{O}})$ is said to be \textbf{pointed}  if there is
            an injective zero-degree differential graded homomorphism $\mathbb{K}\stackrel{\eta}\rightarrow\mathcal{O}(0)$
            which maps the unit $1_{\mathbb{K}}$ to the element denoted $1_0:= \eta(1_{\mathbb{K}})$ and is
            called the base point of the $\Sigma$-dg operad,
             $((\mathcal{O}(n))_{k\geq 0}, d^{\mathcal{O}}).$ In addition $1_0$ satisfies $d^{\mathcal{O}}(1_0)=
             0.$\\ If the homomorphism $\eta$ instead of being injective, is
             an isomorphism, then $((\mathcal{O}(n))_{k\geq 0}, d^{\mathcal{O}})$ is said to be
             \textbf{connected} and $1_0:= \eta(1_{\mathbb{K}}),$ also called the base point of the $\Sigma$-dg operad, satisfies
             $d^{\mathcal{O}}(1_0)= 0.$

    \end{enumerate}
\end{De}
     \begin{Rem}
      \begin{enumerate}
        \item[(i)] The collection of $\Sigma$-connected differential graded operads forms a subcategory of the category
        of operads whose homomorphisms are homomorphisms of
        $\Sigma$-differential graded operads
        $((\mathcal{O}(n))_{k\geq 0}, d^{\mathcal{O}})\stackrel{\psi}\longrightarrow((\mathcal{O^{\prime}}(n))_{k\geq 0}, d^{\mathcal{O^{\prime}}})$
        satisfying: $\psi(1_{0}^{\mathcal{O}})= 1_{0}^{\mathcal{O^{\prime}}},$ with
        $1_{0}^{\mathcal{O}} \in \mathcal{O}(0)$ and $1_{0}^{\mathcal{O^{\prime}}}\in
        \mathcal{P}(0).$
\item[(ii)] As in the case of connected operads (see \cite{L-P}), for every subset
$S\subset [n]= \{1, 2, \cdots, n \},$ with
        cardinality $l<n$, the $\Sigma$-differential graded connected operad $((\mathcal{O}(n))_{k\geq 0}, d^{\mathcal{O}})$
         is equipped with a degeneracy map
        $\mid_S$ defined as follows:
        \begin{center}
            $\begin{array}{ccc}
                \mid_S:(\mathcal{O}(n))_{u} & \longrightarrow & (\mathcal{O}(l))_{u} \\
                p& \mapsto & p\mid_S=p(x_1,...,x_n),
            \end{array}$
            where $ x_i= \left\{\begin{array}{lll}
                1_{\mathcal{O}} \quad \mbox{if} \quad i\in S, \\
                1_{0}  \quad \mbox{if not.}
            \end{array}
            \right.$
        \end{center}

    \end{enumerate}
 \end{Rem}
 \indent The historical example of operads is the endomorphism operad denoted
 here by $ \mathcal{L}_{A}:= End_{A},$ for an object $A$ in the
 category of $\mathbb{K}$-vector spaces.
 \begin{Ex}   Consider a unitary associative chain $\mathbb{K}$-algebra $(\mathcal{A}_{k}, d^{\mathcal{A}}_{k})_{k\geq 0}$ with multiplication
$\mu_{\mathcal{A}}$ and unit $\eta_{\mathcal{A}},$  and let
$((\mathcal{L}^{\prime}_{\mathcal{A}}(n))_{k},
d^{\mathcal{L}^{\prime}_{\mathcal{A}}}_{k})_{k\geq 0}$ be the
$\Sigma$-dg operad of multilinear graded homomorphisms of
$\mathbb{K}$-vector spaces defined by:\\ for all $n\geq 0,$
 $\mathcal{L}^{\prime}_{\mathcal{A}}(n):= \bigoplus\limits_{r\ge 0}(\mathcal{L}^{\prime}_{\mathcal{A}}(n))_{r}$
 with
 $$(\mathcal{L}^{\prime}_{\mathcal{A}}(n))_{r}=\left\{\begin{array}{ll}
    [Hom_{\mathbb{K}}(A^{\otimes n}, A)]_{r} \quad \mbox{for all} \quad n\geq 1,\\ %r\geq 0, \\
   \mathbb{K}\qquad \mbox{if} \quad n= 0.\\
\end{array}
\right.$$ Denote by  $[Hom_{\mathbb{K}}(A^{\otimes n}, A)]_{r},$
the set of $r$-degree multilinear graded
homomorphisms. The differential
 $(\mathcal{L}^{\prime}_{\mathcal{A}}(n))_{r}\stackrel{d^{\mathcal{L}^{\prime}_{\mathcal{A}}}_{r}}\longrightarrow
 \mathcal{L}_{\mathcal{A}}(n))_{r-1}$ is given by $$d^{\mathcal{L}^{\prime}_{\mathcal{A}}}_{r}(\phi)= d^{\mathcal{A}}\circ \phi -
 (-1)^{deg(\phi)} \phi \circ \tilde{d}^{\mathcal{A}},$$
where $deg(\phi)= r$ denotes the degree of $\phi$   and
$$\tilde{d}^{\mathcal{A}}=
\sum\limits_{i=1}^{n}(\overbrace{id_{\mathcal{A}}\otimes
 id_{\mathcal{A}}\otimes\cdots\otimes id_{\mathcal{A}}\otimes\underbrace{d^{\mathcal{A}}}_{i\mbox{-}th
position}\otimes \cdots\otimes id_{\mathcal{A}}}^{n\mbox{-}tuple}),$$ the
differential in $\mathcal{A}^{\otimes n}.$\\
 $((\mathcal{L}^{\prime}_{\mathcal{A}}(n))_{k},
d^{\mathcal{L}^{\prime}_{\mathcal{A}}}_{k})_{k\geq 0}$ is a unitary
connected multiplicative $\Sigma$-dg operad with
\begin{enumerate}
 \item[$\bullet$] the associated
operadic partial composition
$\circ_{i}^{\mathcal{L}^{\prime}_{\mathcal{A}}},$ the substitution
of the value of an operation at the i-th position of an $n$-ary
operation as input,
\item[$\bullet$] its associated multiplication, $\mu_{\mathcal{L}^{\prime}_{\mathcal{A}}}:= \mu_{\mathcal{A}}\in (\mathcal{L}^{\prime}_{\mathcal{A}}(2))(0)$
 satisfies $d^{\mathcal{L}^{\prime}_{\mathcal{A}}}_{0}(\mu_{\mathcal{L}^{\prime}_{\mathcal{A}}})=0,$
\item[$\bullet$] the connectedness of the operad $\mathcal{L}^{\prime}_{A}$ is the consequence of its definition,
\item[$\bullet$] its associated  unit $1_{\mathcal{L}^{\prime}_{\mathcal{A}}}:= id_{\mathcal{A}}: \mathcal{A}\longrightarrow
 \mathcal{A}\in (\mathcal{L}^{\prime}_{\mathcal{A}}(1))(0) $ also satisfies $d^{\mathcal{L}^{\prime}_{\mathcal{A}}}_{0}(1_{\mathcal{L}^{\prime}_{\mathcal{A}}})=0.$
\end{enumerate}
 \end{Ex}
\subsection{$\Sigma$-differential graded Operad representation}
In this subsection, we present a well-known constructive and
algebraic approach to define operads, namely the definition of operads
through generators and relations. This approach consists of describing an operad by specifying a minimal set of generating
operations and relations (see \cite{F},\cite{L},\cite{M} or
\cite{L-V} for more details on the non-graded case).
\subsubsection{Free $\Sigma$-differential graded operads.}
\begin{De}
    Let $\mathcal{Q}=\{\mathcal{Q}(n)= \bigoplus\limits_{r\geq 0}^{}(\mathcal{Q}(n))_{r} ~:~n\ge 0\}$ be a collection of generators.
    A free $\Sigma$-dg operad on $\mathcal{Q}$ is a pair $((\mathcal{F}(\mathcal{Q}), d^{\mathcal{F}(\mathcal{Q})}),\tau)$,
    where $\mathcal{F}(\mathcal{Q})=\{\mathcal{F}(\mathcal{Q}(n))= \bigoplus\limits_{r\geq 0}^{}
    \mathcal{F}((\mathcal{Q}(n))_{r})~:~n\ge 0\}$ is a $\Sigma$-graded operad with the boundary operator $d^{\mathcal{F}(\mathcal{Q})}$
     defined on the generic elements such that for all $n\geq 0,$ for all $r\geq 0$ $d_{r}^{\mathcal{F}(\mathcal{Q})}((\mathcal{Q}(n))_{r})\subseteq
     (\mathcal{Q}(n))_{r-1}$ and $$\tau:=\{(\mathcal{Q}(n))_{r}\stackrel{\tau^{r}_{n}}\longrightarrow\mathcal{F}((\mathcal{Q}(n))_{r})
     ~:~ \tau_{n}^{r-1}\circ d_{r}^{\mathcal{F}(\mathcal{Q}(n))} =
d_{r}^{\mathcal{F}(\mathcal{Q}(n))}\circ \tau^{r}_{n},~n\ge 0,~
r\geq 0\} $$
     is a family of maps that satisfies the universal property: for any other $\Sigma$-dg operad
     $(\mathcal{O}=\{(\mathcal{O}(n))_{r\geq 0}, d^{\mathcal{O}})~:~n\ge 0\} $
     and any family of maps $$f=\{((\mathcal{Q}(n))_{r},d^{\mathcal{F}(\mathcal{Q}(n))})\stackrel{f^{r}_n}\longrightarrow
     ((\mathcal{O}(n))_{r},  d^{\mathcal{O}}): f_{n}^{r-1} \circ d_{r}^{\mathcal{F}(\mathcal{Q}(n))} =
d_{r}^{\mathcal{O}} \circ f^{r}_{n}, n\geq 0,~ r\geq 0 \},$$
     there exists a unique homomorphism of $\Sigma$-dg operads,
     $$\tilde{f}= \{(\mathcal{F}((\mathcal{Q}(n))_{r}), d^{\mathcal{F}(\mathcal{Q}(n))})\stackrel{\tilde{f}_{n}^{r}}\longrightarrow((\mathcal{O}(n))_{r},  d^{\mathcal{O}}), n\geq 0, r\geq 0 \} $$
     such that for each $n\geq 0$ and for each $r\geq 0$, the following diagram commutes
    \begin{center}
        $\xymatrix{
            (\mathcal{Q}(n))_{r} \ar[rd]^{f^{r}_n} \ar[r]^{\tau^{r}_n}& \mathcal{F}((\mathcal{Q}(n))_{r}) \ar[d]^{\exists !\tilde{f}^{r}_n}&\\
            & (\mathcal{O}(n))_{r}& \\
        }$
    \end{center}
\end{De}
\begin{Ex}
    Consider the associative operad $\mathcal{A}_{ss}=\mathbb{K}[\sum]=\bigoplus\limits_{n\ge 0}\mathbb{K}[\sum_n]$.
    Observe that any permutation $\theta \in \sum$ can be written as a successive composition of $\sigma=(12)\in S_2$ by
    itself followed by the action of $\sum$. Moreover any element in $\mathcal{A}_{ss}$ is a $\mathbb{K}$-linear combination
    of elements in $\sum$ and since $\mu_{\mathcal{A}_{ss}}= \sigma=(12)\in(\mathcal{A}_{ss}(2))_{0},$ then $\mathcal{A}_{ss}$ is a free
    $\Sigma$-dg operad concentrated in degree zero, generated by
    $\mu_{\mathcal{A}_{ss}}$ with the zero map as differential.
\end{Ex}
\subsubsection{Representation of $\Sigma$-differential graded operads.}

\begin{De}($\Sigma$-differential graded Operadic ideal)\\
 Let $\mathcal{O}=\{((\mathcal{O}(n))_{r}, d^{\mathcal{O}}): n\ge 0,  r\ge 0\} $, be a $\Sigma$-dg operad and
 $ \mathcal{I}=\{(\mathcal{I}(n))_{r}\subset(\mathcal{O}(n))_{r}: n\geq 0, r\geq 0\}.$
 $\mathcal{I}$ is said to be a $\Sigma$-differential graded ideal ($\Sigma$-dg ideal for short) of $\mathcal{O}$ if the following conditions hold:
 \begin{enumerate}
    \item[(a)] for all $n\ge 0$, $((\mathcal{I}(n))_{r})_{r\geq 0}$ is a $\Sigma$-invariant graded subspace of $\mathcal{O}(n)$
    such that for all $r\geq 0$
    $d_{r}^{\mathcal{O}}((\mathcal{I}(n))_{r})\subset(\mathcal{I}(n))_{r-1};$
    \item[(b)] for all $n\ge 0$, for all $r\geq 0,$ for $x\in(\mathcal{O}(n))_{r}$, $(x_i\in(\mathcal{O}(n_i))_{r_{i}})_{1\le i\le n}$,
    $$\gamma^{\mathcal{O}}(x; x_1,\cdots,x_k)\in(\mathcal{I}(\kappa))_{\varsigma}, \kappa =\sum\limits_{i=1}^{n}n_{i}, \varsigma
    =r + \sum\limits_{i=1}^{n}r_{i},$$
    if $x\in (\mathcal{I}(n))_{r},$ or there exists at least one $i_{0}\in \{1,\cdots, n\} $ such that
    $x_{i_{0}}\in(\mathcal{I}(n_{i_{0}}))_{r_{i_{0}}}.$
 \end{enumerate}
\end{De}
\begin{Rem}
    Let $\mathcal{O}=\{((\mathcal{O}(n))_{r}, d^{\mathcal{O}}): n\ge 0,  r\ge 0\}$ be a $\Sigma$-dg operad and
 $ \mathcal{I}=\{(\mathcal{I}(n))_{r}\subset(\mathcal{O}(n))_{r}: n\geq 0, r\geq
 0\},$ its $\Sigma$-dg ideal. we have the following assertions:
    \begin{enumerate}
        \item[(i)] The sequence
        $(\frac{\mathcal{O}}{\mathcal{I}}, d^{\mathcal{O},\mathcal{I}}) :=\{(\frac{(\mathcal{O}(n))_{r}}{(\mathcal{I}(n))_{r}},
        \frac{(\mathcal{O}(n))_{r}}{(\mathcal{I}(n))_{r}}\stackrel{d^{\mathcal{O},\mathcal{I}}} \longrightarrow \frac{(\mathcal{O}(n))_{r-1}}
        {(\mathcal{I}(n))_{r-1}})
        ~:~n\ge 0, r\ge 0\}$ is
         a $\Sigma$-dg operad called quotient $\Sigma$-dg operad (quotient $\Sigma$-dg operad, for short).
        \item[(ii)] For any other $\Sigma$-dg operad $(\mathcal{O^{\prime}}, d^{\mathcal{O^{\prime}}})$ and
         any homomorphism of $\Sigma$-dg operads,
         $(\mathcal{O}, d^{\mathcal{O}})\stackrel{f}\rightarrow(\mathcal{O^{\prime}}, d^{\mathcal{O^{\prime}}})$,
         there exists a unique homomorphism of $\Sigma$-dg operads,
         $(\frac{\mathcal{O}}{\mathcal{I}}, d^{\mathcal{O},\mathcal{I}})\stackrel{\tilde{f}}\rightarrow(\mathcal{O^{\prime}}, d^{\mathcal{O^{\prime}}})$ such that for
        each $n\ge 0,$ and each $r\ge 0,$ the following diagram commutes:
        \begin{center}
            $\xymatrix{
                ((\mathcal{O}(n))_{r}, d_{r}^{\mathcal{O}}) \ar[rd]^{f^{r}_n} \ar[r]^{\pi^{r}_n}& (\frac{(\mathcal{O}(n))_{r}}{(\mathcal{I}(n))_{r}}, d_{r}^{\mathcal{O},\mathcal{I}}) \ar[d]^{\exists !\tilde{f}^{r}_n}& here ~\pi^{r}_{n}~ is~ the~ canonical~ surjection \\
                &((\mathcal{O}^{\prime}(n))_{r}, d_{r}^{\mathcal{O^{\prime}}})&\\
            }$
        \end{center}
    \end{enumerate}
\end{Rem}
\begin{De}
    Let $(\mathcal{F}(\mathcal{Q}), d^{\mathcal{F}(\mathcal{Q})})$, a free $\Sigma$-dg operad over the collection of
    generators $\mathcal{Q}=\{\mathcal{Q}(n): n\ge 0\} $. Denote by $\langle \mathcal{R}\rangle$
    its $\Sigma$-dg ideal generated by $\mathcal{R}$, a set
    of relations. The quotient $\Sigma$-dg operad
     $(\langle\mathcal{F}(\mathcal{Q}),\mathcal{R}\rangle:=\frac{\mathcal{F}(\mathcal{Q})}{\langle
\mathcal{R}\rangle},
     d^{\mathcal{F}(\mathcal{Q}), \langle \mathcal{R}\rangle}) $ is
    called $\Sigma$-dg operad generated by $\mathcal{Q}$ subject
    to the relations in $\mathcal{R}$.
\end{De}
\begin{Ex}
    Let $\mathcal{Q}=\{(\mathcal{Q}(n))_{r} : n\ge 0, r\ge 0\} $ be a collection of generators subject to the following conditions:\\
    $(\mathcal{Q}(1))_{r}$ and $(\mathcal{Q}(2))_{r}$ are graded sets concentrated in zero degree
with $(\mathcal{Q}(1))_{0}=\{1_{\mathcal{Q}}\},$
$(\mathcal{Q}(2))_{0}= \{\nu\} $, for some symbol $\nu$ such that
$\nu\circ_{1}1_{\mathcal{Q}}= \nu\circ_{2}1_{\mathcal{Q}} = \nu =
1_{\mathcal{Q}}\circ_{1}\nu.$
 $(\mathcal{Q}(n))_{r}=\emptyset$ for $n\neq 1; 2,  r\ge
0$.
 Setting $\mathcal{R}=\{\nu +\nu\cdot (12)=0; (\nu,(1_{\mathcal{Q}},\nu))+(\nu,(1_{\mathcal{Q}},\nu))\cdot (123)+(\nu,(1_{\mathcal{Q}},\nu))\cdot (132)=0\},$
    then the Lie operad $\mathcal{O}_{Lie}=\langle\mathcal{F}(\mathcal{Q}),\mathcal{R}\rangle$, viewed as free dg operad with differential zero is
    represented as operad generated by $\mathcal{Q}$ subject to the relations in
$\mathcal{R}$.
\end{Ex}

\section{Cosimplicial structure and induced algebraic structures on a free
$\Sigma$-differential graded operad}
 \subsection{Cosimplicial structure on a free $\Sigma$-differential graded operad}
We begin this subsection by adapting some useful results developed in \cite{B-T} to the framework of free $\sum$-dg operads.\\
Let $(\mathcal{O}, d^{\mathcal{O}})$ be a pointed multiplicative
free dg operad with multiplication $\mu\in\mathcal{O}(2)$ and base point
$1_0\in\mathcal{O}(0)$  satisfying
$\mu\circ_11_0=1_{\mathcal{O}}=\mu\circ_21_0$. Assume that
$\{B^{r}_{n}(O); n\geq 0, r\geq 0\}$ is a collection of generators
of $(\mathcal{O}, d^{\mathcal{O}}).$
In the particular case of
operads over the category $Vect_{\mathbb{K}},$ it is well known that
every $\mathbb{K}$-vector space has a basis, finite or infinite. Hence,
such operads are freely generated by the collection of
generators $\{B^{r}_{n}(\mathcal{O}): \mbox{a basis for} ~ ~
(O(n))_{r}; n \geq 0, r\geq 0\}$. Consequently,  the definitions of
the partial composition (respectively, total composition) of operads,
as well as the cosimplical structure defined in \cite{B-T}, can be
restricted to generators as shown below and then extended by linearity:

\begin{Pro} Let $(\mathcal{O}, d^{\mathcal{O}})$ be a pointed multiplicative free dg operad (symmetric or not)
    with multiplication $\mu\in (\mathcal{O}(2))_{0}$ and base point $1_0\in (\mathcal{O}(0))_{0}$
    subject to the following relation: $\mu\circ_11_0=1_{\mathcal{O}}=\mu\circ_21_0$. The
    cofaces and codegeneracies defined below endow $(\mathcal{O}, d^{\mathcal{O}})$ with a differential cosimplical structure.
    \begin{enumerate}
        \item[(a)] The codegeneracies.\\
        For $m,n\ge 0$, $1\le i\le n,$ $1\le j\le m+n-1$, $r, s\geq 0,$ denote by $B_{(\mathcal{O}(m))_{s}}$ (respectively $B_{(\mathcal{O}(n))_{r}}$)
        a given set of generators of $(\mathcal{O}(m))_{s}$ (respectively $(\mathcal{O}(n))_{r}$).
         Consider $x^m\in B_{(\mathcal{O}(m))_{s}}$,  $x^n\in B_{(\mathcal{O}(n))_{r}}$ and define:
          $$(\mathcal{O}(n))_{r}\stackrel{s_i^{n-1}}\longrightarrow(\mathcal{O}(n-1))_{r}~such~ that$$ $$s_i^{n-1}(x^n)=x^n\circ_i1_0,~ s_1^0(1_\mathcal{O})=1_0~ and$$
        $$s_j^{m+n-2}(x^n\circ_i x^m)=\left \{ \begin{array}{lll}
            s_j^{n-1}(x^n)\circ_{i-1}x^m, ~if~ 1\le j< i,\\
            x^n\circ_is_{j-i+1}^{m_1}(x^m), ~if~ i\le j\le i+m,\\
            s_{j-i}^{n-1}(x^n)\circ_{i-1}x^m, ~if~ i+m<j.
        \end{array}
        \right.$$
        \item[(b)] The coface.\\
        For $m,n\ge 0$, $0\le i\le n,$ $0\le j\le m+n$, $r, s\geq 0,$ denote by $B_{(\mathcal{O}(m))_{s}}$ (respectively $B_{(\mathcal{O}(n))_{r}}$), a
         given collection of generators of $(\mathcal{O}(m))_{s}$ (respectively $(\mathcal{O}(n))_{r}$).
         Consider $x^{m}\in B_{(\mathcal{O}(m))_{s}}$,  $x^n\in B_{(\mathcal{O}(n))_{r}}$ and define:
          $$ \begin{array}{ccc}
          D_i^{n+1}: (\mathcal{O}(n))_{r} & \longrightarrow &
          (\mathcal{O}(n+1))_{r}\\
        x^n &\longmapsto & D_i^{n+1}x^n=\left \{ \begin{array}{ll}
             x^n\circ_i \mu, ~if~ 1\le i\le n\\
        \mu\circ_2 x^n, ~if~ i=0\\
        \mu\circ_1 x^n, ~if~ i=n+1
    \end{array}
\right.
\end{array}$$
        with $\mathbb{K}\cong(\mathcal{O}(0))_{0}\stackrel{D^0=\eta}\longrightarrow(\mathcal{O}(1))_{0},$ and  \\
        $$D_j^{m+n}((x^n\circ_ix^{m})=\left \{ \begin{array}{lll}
            D_j^{n+1}(x^n)\circ_{i+1}x^{m}, ~if~ j=0,\\
            D_j^{n+1}(x^n)\circ_ix^{m}, ~if~ j=n+1,\\
            D_{j-m+1}^{n+1}(x^n)\circ_{i}x^{m}, ~if~ m<j\le n+m-1.
        \end{array}
        \right.$$
    \end{enumerate}
\end{Pro}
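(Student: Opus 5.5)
The plan is to check the cosimplicial identities directly on the level of whole operations. After that, we verify that the extensions to composites in (a) and (b) agree with these definitions, and that everything commutes with $d^{\mathcal{O}}$. Each $s_i^{n-1}$ and $D_i^{n+1}$ is given on an arbitrary $x\in(\mathcal{O}(n))_r$ by a partial composition with a fixed element: $s_i^{n-1}(x)=x\circ_i 1_0$, $D_i^{n+1}(x)=x\circ_i\mu$ for $1\le i\le n$, $D_0^{n+1}(x)=\mu\circ_2 x$, and $D_{n+1}^{n+1}(x)=\mu\circ_1 x$. These maps are linear and degree preserving, so it suffices to prove the identities on the generators $B_{(\mathcal{O}(n))_r}$ and extend by linearity, as the paragraph preceding the statement allows. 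The recursive formulas for $s_j(x^n\circ_i x^m)$ and $D_j(x^n\circ_i x^m)$ are then not extra data. We will show that each is a consequence of the second (parallel) associativity axiom $(x\circ_i y)\circ_{k+m-1}z=(x\circ_k z)\circ_i y$ and of the sequential axiom $(x\circ_i y)\circ_{i+j-1}z=x\circ_i(y\circ_j z)$ in Definition 2.1(i). They are therefore consistent with the defining formulas. For example, for $j<i$ we have $(x^n\circ_i x^m)\circ_j 1_0=(x^n\circ_j 1_0)\circ_{i-1}x^m$, and for $i\le j\le i+m-1$ we get $x^n\circ_i(x^m\circ_{j-i+1}1_0)$. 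The index shifts in the statement are to be read in this sense.

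Next we verify the cosimplicial identities:
$D_jD_i=D_iD_{j-1}$ for $i<j$;
$s_js_i=s_is_{j+1}$ for $i\le j$;
$s_jD_i=D_is_{j-1}$ for $i<j$;
$s_jD_i=\mathrm{id}$ for $i=j,j+1$;
$s_jD_i=D_{i-1}s_j$ for $i>j+1$.

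We do this in four cases.

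\textbf{Both operations at internal inputs.} The identities involving only $s$'s, and the $D$--$D$ identities for $1\le i<j\le n+1$ at internal positions, follow from parallel associativity. This uses the fact that $1_0$ has arity $0$ and $\mu$ has arity $2$, which produces exactly the index shifts $-1$ and $+1$.

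\textbf{Cofaces at an external position.} Identities involving $D_0$ or $D_{n+1}$ use sequential associativity together with the relation $\mu\circ_1\mu=\mu\circ_2\mu$. For instance, $D_0D_0(x)=\mu\circ_2(\mu\circ_2 x)=(\mu\circ_2\mu)\circ_3 x$ and $D_1D_0(x)=(\mu\circ_2 x)\circ_1\mu=(\mu\circ_1\mu)\circ_{3}x$, and these agree by multiplicativity.

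\textbf{The retraction identities.} The identities $s_jD_j=s_jD_{j+1}=\mathrm{id}$ reduce, via sequential associativity, to $x\circ_j(\mu\circ_1 1_0)=x\circ_j 1_{\mathcal{O}}=x$ and the analogous identity with $\mu\circ_2 1_0$. Here we use the hypothesis $\mu\circ_1 1_0=1_{\mathcal{O}}=\mu\circ_2 1_0$ and the unit axiom of Definition 2.1(ii). The external cases reduce the same way, e.g. $(\mu\circ_1 x)\circ_{n+1}1_0=(\mu\circ_2 1_0)\circ_1 x=x$.

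\textbf{Remaining mixed identities.} The identities $s_jD_i$ with $i<j$ or $i>j+1$ again follow from parallel associativity.

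Compatibility with the differential follows from Definition 2.1(iv): $d(x\circ_i 1_0)=d(x)\circ_i 1_0\pm x\circ_i d(1_0)=d(x)\circ_i 1_0$, since $d(1_0)=0$. Likewise $d(x\circ_i\mu)=d(x)\circ_i\mu$, since $d_0(\mu)=0$. For $D_0$ the sign is $(-1)^{|\mu|}=1$, so $d(\mu\circ_2 x)=\mu\circ_2 d(x)$. Hence every coface and codegeneracy is a chain map $(\mathcal{O}(n))_\bullet\to(\mathcal{O}(n\pm1))_\bullet$, and we obtain a cosimplicial object in chain complexes. In the symmetric case nothing more is needed, since the statement does not require equivariance.

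The main obstacle is the bookkeeping in the cases mixing external and internal positions, such as $D_jD_0$, $D_{n+2}D_i$ and $s_jD_0$. There the reindexing under parallel associativity (which swaps the order of insertion) must be matched precisely with the sequential axiom and the relation $\mu\circ_1\mu=\mu\circ_2\mu$. I would first write all maps in the uniform form $x\mapsto\gamma(\mu;\dots)$ or $x\mapsto\gamma(x;1,\dots,\mu,\dots,1)$, using Remark 2.3, so that each identity becomes an equality of two total compositions. These are then compared by the associativity of $\gamma$.
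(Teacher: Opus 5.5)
The paper gives no proof of this proposition; it is imported from [B-T]. There is therefore nothing to compare your route against. Your plan is the standard argument for operads with multiplication (as in Gerstenhaber--Voronov), and it works in outline:
\begin{itemize}
\item check the cosimplicial identities directly from the two associativity axioms, the unit axiom, $\mu\circ_1\mu=\mu\circ_2\mu$ and $\mu\circ_1 1_0=1_{\mathcal{O}}=\mu\circ_2 1_0$;
\item then use $d\mu=0=d1_0$ and Definition 2.1(iv) for compatibility with $d^{\mathcal{O}}$.
\end{itemize}
Since $\mu$ and $1_0$ have degree $0$, the Koszul signs suppressed in Definition 2.1 never intervene, so relying on the unsigned axioms is harmless. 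However, your case analysis contains two actual mistakes.

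\textbf{Indexing.} The paper's codegeneracies $s_j=(-)\circ_j 1_0$, $1\le j\le n$, are $1$-based, but the identities you list are the standard $0$-based ones. With the paper's $s_j$ the identities read
\[
s_jD_j=\mathrm{id}=s_{j+1}D_j,\qquad s_jD_i=D_is_{j-1}\ (i<j-1),\qquad s_jD_i=D_{i-1}s_j\ (i>j).
\]
Alternatively, reindex by $s^j:=s_{j+1}$. As you wrote it, $s_jD_{j+1}=\mathrm{id}$ is false: $(x\circ_{j+1}\mu)\circ_j 1_0=(x\circ_j 1_0)\circ_j\mu=D_js_j(x)$. Your computation with $\mu\circ_2 1_0$ actually proves $s_{j+1}D_j=\mathrm{id}$.

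\textbf{The adjacent internal coface identity needs associativity of $\mu$.} Your claim that internal $D$--$D$ identities follow from parallel associativity fails when the indices are adjacent. For $1\le i\le n$, both $D_{i+1}D_i$ and $D_iD_i$ insert into the same input of $x$. By sequential associativity,
\[
(x\circ_i\mu)\circ_{i+1}\mu=x\circ_i(\mu\circ_2\mu),\qquad (x\circ_i\mu)\circ_i\mu=x\circ_i(\mu\circ_1\mu),
\]
and these agree only because $\mu\circ_1\mu=\mu\circ_2\mu$. For a non-associative $\mu$ this identity genuinely fails. Parallel associativity covers only the case $j\ge i+2$. So multiplicativity is needed at every position, not only when $D_0$ or $D_{n+1}$ is involved.

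\textbf{The recursive formulas.} The printed formulas are not just index-shifted versions of true ones, so say explicitly that you derive corrected versions.
\begin{itemize}
\item The third codegeneracy case, $s^{n-1}_{j-i}(x^n)\circ_{i-1}x^m$, is wrong. For $j\ge i+m$ the correct formula is $(x^n\circ_{j-m+1}1_0)\circ_i x^m$.
\item In the coface recursion, the last coface is $D^{n+m}_{n+m}$, not the case $j=n+1$.
\item The formula $D_{j-m+1}(x^n)\circ_i x^m$ holds only for $i+m\le j\le n+m-1$.
\item Two cases are missing: $D_j(x^n)\circ_{i+1}x^m$ for $1\le j<i$, and $x^n\circ_i D_{j-i+1}(x^m)$ for $i\le j\le i+m-1$.
\end{itemize}
With these corrections, your remark that they follow from associativity is right. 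Since every coface and codegeneracy is composition with a fixed element on all of $\mathcal{O}(n)$, no reduction to generators is needed.
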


 \begin{Rem}
 \begin{enumerate}
        \item[(a)] Observe that for any pointed multiplicative free dg operad generated by a collection of
objects subject to some relations, the statements of the above
proposition will be similar with the particularity that generators
will be replaced by their equivalent classes.
 \item[(b)]Furthermore for a given free pointed multiplicative
 $\Sigma$-dg operad $(\mathcal{O},
d^{\mathcal{O}})$ together with its cosimplicial structure, we
recall below some results established in \cite{B-T}:
\begin{enumerate}
        \item[(i)] The map
        $\begin{array}{ccc}
        (\mathcal{O}(n))_{r}& \stackrel{\partial^{r}_{n}}\longrightarrow & (\mathcal{O}(n-1))_{r}\\
       x & \longmapsto &
         \partial^{r}_{n}=\sum\limits_{i=1}^n(-1)^{i} s_{i}^{n-1}
    \end{array}$
        is a boundary operator.
        \item[(ii)] The map
        $\begin{array}{ccc}
        (\mathcal{O}(n))_{r}& \stackrel{\delta^{r}_{n}}\longrightarrow & (\mathcal{O}(n+1))_{r}\\
       x & \longmapsto &
        \delta^{r}_n=\sum\limits_{i=0}^{n+1}(-1)^i D_i^{n+1},
    \end{array}$
    is a coboundary operator.
        \item[(iii)] The triple $(\mathcal{O},\odot, D=\delta +\partial)$ is a double complex algebra, called a bicomplex
        algebra, with the product defined by: $$\begin{array}{ccc}
        \bigoplus\limits_{u+v=r}^{}(\mathcal{O}(p))_{u}\otimes(\mathcal{O}(q))_{v}& \stackrel{\odot}\longrightarrow & (\mathcal{O}(p+q))_{r}\\
       x\otimes y & \longmapsto & x\odot y= \mu(x,y)=(\mu\circ_2 y)\circ_1 x.
    \end{array}$$
 \end{enumerate}
 \end{enumerate}
\end{Rem}
\subsection{Induced algebraic structures on a free
$\Sigma$-differential graded operad.} Hereafter, we endow a free
cosimplicial $\Sigma$-dg operad $(\mathcal{O}, d^{\mathcal{O}})$
with several differential graded algebraic structures, including a
twisted differential graded algebraic structure which, in the
particular non-differential graded case of the associative operad
$\mathcal{A}_{ss},$ coincides with the Malvenuto-Reutenauer
twisted algebra structure(\cite{M-R}).\\
\begin{Pro}
    Let $(\mathcal{O}, d^{\mathcal{O}})$ be a free pointed multiplicative
$\Sigma$-dg operad together with its cosimplicial structure, such
that its multiplication $\mu$ and its base point $1_0$ are subject
to the relation: $\mu\circ_11_0=1_{\mathcal{O}}=\mu\circ_21_0$.
     The following statements hold:
    \begin{enumerate}
\item[(a)] The pair $((\mathcal{O}, d^{\mathcal{O}}), \odot)$ is a chain complex algebra.
\item[(b)] The triple $((\mathcal{O}, d^{\mathcal{O}}), \partial, \odot)$ is a bicomplex algebra.
\item[(c)] The triple $((\mathcal{O}, d^{\mathcal{O}}), \delta, \odot)$ is a bicomplex algebra.
    \end{enumerate}
\end{Pro}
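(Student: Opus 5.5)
Set $x\odot y=(\mu\circ_2 y)\circ_1 x$ for $x\in(\mathcal{O}(p))_u$ and $y\in(\mathcal{O}(q))_v$, so that $x\odot y\in(\mathcal{O}(p+q))_{u+v}$. The plan is to prove (a) directly from the dg-operad axioms and then obtain (b) and (c) by combining (a) with the bicomplex statement recalled in the preceding Remark, item (b)(iii). There $(\mathcal{O},\odot,D=\delta+\partial)$ is a bicomplex algebra, and the underlying computations give the Leibniz rules for $\partial$ and $\delta$ separately with respect to $\odot$.

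\textbf{Part (a).} The product $\odot$ is associative, because
\[
(x\odot y)\odot z=\mu(\mu(x,y),z)=\gamma(\mu\circ_1\mu;x,y,z)=\gamma(\mu\circ_2\mu;x,y,z)=x\odot(y\odot z),
\]
using the multiplicativity relation $\mu\circ_1\mu=\mu\circ_2\mu$ together with the associativity axiom (i) of the partial compositions. It is unital with unit $1_0\in(\mathcal{O}(0))_0$, since $\mu\circ_11_0=1_{\mathcal{O}}=\mu\circ_21_0$ gives $1_0\odot y=(\mu\circ_2 y)\circ_1 1_0=(\mu\circ_1 1_0)\circ_1 y=1_{\mathcal{O}}\circ_1y=y$ by axiom (i), and symmetrically $x\odot 1_0=x$. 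For compatibility with $d^{\mathcal{O}}$, apply axiom (iv) twice and use $d_0(\mu)=0$, which holds because $\mu$ is multiplicative:
\[
d(x\odot y)=d(\mu\circ_2 y)\circ_1 x+(-1)^{|\mu\circ_2 y|}(\mu\circ_2y)\circ_1 dx=(\mu\circ_2 dy)\circ_1x+(-1)^{v}(\mu\circ_2y)\circ_1dx .
\]
Here $|\mu\circ_2 y|=v$, the degree of $y$. To bring this into the standard form $dx\odot y+(-1)^u x\odot dy$, reorder $(\mu\circ_2 y)\circ_1 x=(\mu\circ_1 x)\circ_{p+1} y$ using the second associativity relation, with the Koszul sign $(-1)^{uv}$ that this interchange carries in the graded setting. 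Equivalently, one can start from the formula for $d\gamma$ in the definition of the total composition, applied to $\gamma(\mu;x,y)$. This yields the Leibniz rule, and so $((\mathcal{O},d^{\mathcal{O}}),\odot)$ is a chain complex algebra.

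\textbf{Parts (b) and (c).} The arity grading gives a second direction. The operator $\partial$ lowers arity and $\delta$ raises it, while both preserve the degree $r$, and each squares to zero by items (i) and (ii) of the Remark. The required compatibilities are:
\begin{enumerate}
\item[(1)] $d^{\mathcal{O}}\partial=\partial d^{\mathcal{O}}$ and $d^{\mathcal{O}}\delta=\delta d^{\mathcal{O}}$, or anticommutation after the usual sign twist $(-1)^n$ if a total differential is wanted;
\item[(2)] the Leibniz rules of $\partial$ and $\delta$ with respect to $\odot$, which are taken from Remark (b)(iii).
\end{enumerate}
For (1), it suffices to check that each $s_i^{n-1}$ and each $D_i^{n+1}$ is a chain map. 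We have $s_i(x)=x\circ_i1_0$, so axiom (iv) gives $d(s_ix)=s_i(dx)\pm x\circ_i d(1_0)=s_i(dx)$, since $d(1_0)=0$ and $1_0$ has degree $0$. Likewise $D_i(x)$ is a composite with $\mu$, and $d\mu=0$ gives $d(D_ix)=D_i(dx)$; here no sign appears because $|\mu|=0$. By linearity over the chosen generators, as allowed by freeness, the alternating sums $\partial$ and $\delta$ commute with $d^{\mathcal{O}}$.

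\textbf{Main obstacle.} I expect the sign bookkeeping in the Leibniz rule of (a) to be the hardest step. Using the definition $(\mu\circ_2y)\circ_1x$, the differential first meets $y$, so the naive sign attaches to $x$ rather than to $y$. The Koszul sign of the parallel-composition axiom has to be tracked carefully to recover the standard convention, or else the convention adopted for $\odot$ has to be stated explicitly. The remaining verifications are formal consequences of $d\mu=0$, $d1_0=0$ and the axioms of the operad.
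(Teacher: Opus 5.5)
Your overall route is the paper's. You derive (a) from the operad axioms together with $\mu\circ_1\mu=\mu\circ_2\mu$, $\mu\circ_i1_0=1_{\mathcal{O}}$ and $d_0(\mu)=0$. For (b) and (c) you check that $d^{\mathcal{O}}$ commutes with every $s_i^{n-1}$ and $D_i^{n+1}$, using $d(1_0)=0$, $d(\mu)=0$ and axiom (iv), and you import the Leibniz rules of $\partial,\delta$ from Remark 3.2(b)(iii), exactly as the paper does. Your checks of associativity and of the unit $1_0$ in (a) are more explicit than the paper's one-line citation, and they are fine.

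The step that fails is the sign manipulation in the Leibniz rule of (a). The interchange $(\mu\circ_2 y)\circ_1 x=(-1)^{uv}(\mu\circ_1 x)\circ_{p+1}y$ is an identity. Differentiating its right-hand side with axiom (iv) and converting back gives
\[
d(x\odot y)=(-1)^{uv}\bigl[(-1)^{(u-1)v}\,dx\odot y+(-1)^{u}(-1)^{u(v-1)}\,x\odot dy\bigr]=(-1)^{v}\,dx\odot y+x\odot dy,
\]
which is what you had before reordering; rewriting $x\odot y$ cannot move the sign from one term to the other. With consistent Koszul signs, the standard form $dx\odot y+(-1)^{u}x\odot dy$ holds for $\gamma(\mu;x,y)=(\mu\circ_1 x)\circ_{p+1}y=(-1)^{uv}x\odot y$, not for $\odot$ itself.

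Your ``equivalent'' route through the $d\gamma$ formula of Definition 2.4 gives the standard form only because Remark 2.4(b) identifies $\gamma(\mu;x,y)$ with $(\mu\circ_2 y)\circ_1 x$ without a sign. Given axiom (iv), that identification is incompatible with the Koszul-signed interchange you used in the first route. The paper states the parallel axiom with no sign at all, which is not compatible with (iv) once positive degrees occur. So the two routes are not equivalent, and the paper's one-line proof of (a) does not address this.

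To close (a) you must fix a convention explicitly. Any one of these works:
\begin{enumerate}
\item[(1)] Record the rule $d(x\odot y)=(-1)^{|y|}dx\odot y+x\odot dy$, i.e.\ a derivation acting from the right.
\item[(2)] Work with the product $\gamma(\mu;x,y)=(-1)^{|x||y|}x\odot y$ instead of $\odot$.
\item[(3)] Replace $d^{\mathcal{O}}$ by $x\mapsto(-1)^{|x|}d^{\mathcal{O}}x$. This satisfies the standard Leibniz rule for $\odot$ and still commutes with $\partial$ and $\delta$, since these preserve the degree.
\end{enumerate}
With any of these choices, (b) and (c) go through as you wrote them.
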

\begin{proof}
\begin{enumerate}
\item[(a)] The fact that the pair $((\mathcal{O}, d^{\mathcal{O}}), \odot)$ is a chain complex algebra follows from Definition 2.3,
 Definition 2.6(1) and  Remark 3.2($b\mbox{-}(iii)$).
\item[(b)] To show that the triple $((\mathcal{O}, d^{\mathcal{O}}), \partial, \odot)$ is a bicomplex algebra, observe that for $n\geq 0, r\geq 0,$
 for $x\in (\mathcal{O}(n))_{r}$, $$d_{r}^{\mathcal{O}(n-1)}(\partial^{r}_{n}(x))= \sum\limits_{i=1}^{n} (-1)^{i} d_{r}^{\mathcal{O}(n-1)}(x \circ_{i} 1_{0})=
 \sum\limits_{i=1}^{n} (-1)^{i} d_{r}^{\mathcal{O}(n)}(x) \circ_{i} 1_{0}=
 \partial^{r-1}_{n}(d_{r}^{\mathcal{O}(n)}(x)).$$ The result follows from
 the previous assertion and Remark 2.16($b\mbox{-}(iii)$).
\item[(c)] For the last assertion, observe that for $n\geq 0, r\geq 0,$
 for $x\in (\mathcal{O}(n))_{r}$,
 $$ \begin{array}{ccc}
        d_{r}^{\mathcal{O}(n+1)}(\delta^{r}_{n}(x)) &=\left \{ \begin{array}{ll}
              \mu\circ_2 d_{r}^{\mathcal{O}(n)}(x^{n}), ~if~ i=0\\d_{r}^{\mathcal{O}(n)}( x^{n}) \circ_i \mu, ~if~ 1\le i\le n\\
        \mu\circ_1 d_{r}^{\mathcal{O}(n)}(x^{n}), ~if~ i=n+1
    \end{array}
\right. =& ~\delta^{r-1}_{n}(d_{r}^{\mathcal{O}(n)}(x^{n})).
\end{array}$$
The result follows again by assertion (a) and Remark 2.16($b\mbox{-}(iii)$).
\end{enumerate}This completes the proof. \end{proof}

 \indent \textbf{Twisted algebraic structure on a free cosimplicial $\Sigma$-dg operad}\\

Let $(\mathcal{O}, d^{\mathcal{O}})$ be a free cosimplicial
$\Sigma$-dg operad. Define a non-commutative product on
$(\mathcal{O}, d^{\mathcal{O}})$ called  shuffle product as follows:
$$\begin{array}{ccc}
    (\mathcal{O}(p) \otimes \mathcal{O}(q))_{r} = \bigoplus\limits_{u + v = r}(\mathcal{O}(p))_{u}\otimes (\mathcal{O}(q))_{v} &
    \stackrel{\nabla}\longrightarrow &(\mathcal{O}(p+q))_{r} \\
    x\otimes y & \longmapsto & x\nabla y= \sum\limits_{\sigma\in \Sigma_{(p,q)}}sgn(\sigma)(x\odot y)\cdot\sigma^{-1}.
 \end{array}$$
 More explicitly
$$\begin{array}{ccc}
    (x\nabla y)(x_1, x_2, \cdots, x_{p+q})&= &\sum\limits_{\sigma\in \Sigma_{(p,q)}} sgn(\sigma)(\mu(x, y)\cdot\sigma^{-1})(x_1, x_2, \cdots, x_{p+q})\\
     &= & \sum\limits_{\sigma\in \Sigma_{(p,q)}}(-1)^{\xi + \epsilon(\sigma)}\mu(x(x_{\sigma(1)},\cdots,x_{\sigma(p)}), y(x_{\sigma(p+1)},\cdots,x_{\sigma(p+q)}))
\end{array}$$
where $x_{i}\in (\mathcal{O}(q))_{r_{i}},$ $\xi=
deg(y)(\sum\limits_{j=1}^{p} deg(x_{j}))$, $sgn(\sigma)=
(-1)^{\epsilon(\sigma)}$ denotes the signature of $\sigma \in
\Sigma_{(p,q)}$, with $\Sigma_{(p,q)}$ being the set of $(p,q)$-shuffles
 (see \cite{E-M}), and the equivariance property
mentioned in Definition 2.3 is explicitly given by:
$$\gamma^{\mathcal{O}}(x\cdot\sigma; x_1, x_2, \cdots,
 x_n):= \gamma^{\mathcal{O}}(x; x_{\sigma^{-1}(1)}, x_{\sigma^{-1}(2)},\cdots, x_{\sigma^{-1}(n)})
 .$$ This product leads to the following statement:
\begin{Pro} A free cosimplicial $\Sigma$-dg operad, $(\mathcal{O}, d^{\mathcal{O}})$,
together with the shuffle product $\nabla$ is a differential graded
algebra. It is called twisted differential graded algebra.
\end{Pro}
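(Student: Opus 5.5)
To prove that $(\mathcal{O}, d^{\mathcal{O}}, \nabla)$ is a differential graded algebra, I will check three things: that $\nabla$ is well defined and bigraded (arity $p+q$, degree $u+v$); that $d^{\mathcal{O}}$ is a derivation for $\nabla$; and that $\nabla$ is associative with a unit. The differential graded algebra should be understood as the direct sum $\bigoplus_{n}\mathcal{O}(n)$. The unit will be the base point $1_0\in(\mathcal{O}(0))_0$. Indeed, $\Sigma_{(p,0)}=\{id\}$, and $x\odot 1_0=(\mu\circ_2 1_0)\circ_1 x=1_{\mathcal{O}}\circ_1 x=x$, using the relation $\mu\circ_2 1_0=1_{\mathcal{O}}$; the case $1_0\nabla y=y$ is symmetric.

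\textbf{Leibniz rule.} Here I will use Proposition 3.3(a): $d^{\mathcal{O}}(x\odot y)=d^{\mathcal{O}}(x)\odot y+(-1)^{|x|}x\odot d^{\mathcal{O}}(y)$, which comes from axiom (iv) of Definition 2.3 together with $d_0(\mu)=0$. The same axiom (iv) says that $d^{\mathcal{O}}$ commutes with the right $\Sigma$-action. Applying $d^{\mathcal{O}}$ term by term to $\sum_{\sigma}sgn(\sigma)(x\odot y)\cdot\sigma^{-1}$ therefore gives $d(x)\nabla y+(-1)^{|x|}x\nabla d(y)$ at once. The signs $sgn(\sigma)$ are unaffected, because they depend only on the arities and $d$ preserves arity.

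\textbf{Associativity.} This is the main step. First, $\odot$ is associative: $(x\odot y)\odot z=\mu(\mu(x,y),z)=\mu(\mu\circ_1\mu;x,y,z)$ and $x\odot(y\odot z)=\mu(\mu\circ_2\mu;x,y,z)$, and these agree because $\mu\circ_1\mu=\mu\circ_2\mu$ (Definition 2.6(1)). Next I will show an equivariance identity for $\odot$: for $\alpha\in\Sigma_p$ and $\beta\in\Sigma_q$, $(x\cdot\alpha)\odot(y\cdot\beta)=(x\odot y)\cdot(\alpha\times\beta)$. This follows from axiom (iii) of Definition 2.3 applied twice. Expanding,
$$(x\nabla y)\nabla z=\sum_{\tau\in\Sigma_{(p+q,r)}}\sum_{\sigma\in\Sigma_{(p,q)}}sgn(\tau)sgn(\sigma)\big(((x\odot y)\cdot\sigma^{-1})\odot z\big)\cdot\tau^{-1},$$
and the identity rewrites each summand as $sgn(\tau(\sigma\times id))\,((x\odot y)\odot z)\cdot(\tau(\sigma\times id))^{-1}$. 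The map $(\tau,\sigma)\mapsto\tau(\sigma\times id)$ is a bijection $\Sigma_{(p+q,r)}\times\Sigma_{(p,q)}\to\Sigma_{(p,q,r)}$ onto the $(p,q,r)$-shuffles, and the sign is multiplicative. Hence $(x\nabla y)\nabla z=\sum_{\omega\in\Sigma_{(p,q,r)}}sgn(\omega)(x\odot y\odot z)\cdot\omega^{-1}$. The same argument, using the bijection $(\tau,\sigma)\mapsto\tau(id\times\sigma)$ from $\Sigma_{(p,q+r)}\times\Sigma_{(q,r)}$, gives the identical expression for $x\nabla(y\nabla z)$. Associativity of $\odot$ then finishes the argument.

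The expected obstacle is the sign bookkeeping. The explicit formula carries a Koszul sign $(-1)^{\xi}$, where $\xi$ is computed from the degrees of the inputs, and I must check that these Koszul signs compose correctly under the shuffle decomposition. I will first verify that the sign of the composite block permutation equals the product of the two signs, since shuffle signs multiply. I will then check that the Koszul contributions also compose, because they are determined by the total permutation $\omega$ alone. If this causes trouble, I will fall back on working with permutations only (the $\cdot\sigma^{-1}$ form of the definition), where the Koszul signs are built into the symmetric monoidal structure and need no separate tracking.
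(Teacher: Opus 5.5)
Your proposal is correct and follows the paper's own route: associativity from $\mu\circ_1\mu=\mu\circ_2\mu$ together with associativity of shuffles, the unit $1_0$ from $\mu\circ_1 1_0=1_{\mathcal{O}}=\mu\circ_2 1_0$, and the Leibniz rule from the $\Sigma$-equivariance of $d^{\mathcal{O}}$ combined with the derivation property for $\odot$. You make explicit what the paper compresses into ``the shuffle is associative'' (the block equivariance $(x\cdot\alpha)\odot(y\cdot\beta)=(x\odot y)\cdot(\alpha\times\beta)$ and the factorization of $(p,q,r)$-shuffles), and since $\nabla$ is defined through the $\cdot\sigma^{-1}$ form, your main argument is already sign-complete and the hedged Koszul-sign fallback is not needed.
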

\begin{proof}
   Observe that $\nabla$ is associative with unit $1_{0}$ since $\mu\circ_{1}\mu - \mu\circ_{2}\mu=
   0,$ the shuffle is associative
    and $\mu\circ_{1}1_{0}= 1_{\mathcal{O}} =\mu\circ_{2}1_{0}$. Moreover,
   the $\Sigma$-equivariant  linear homomorphism $d^{\mathcal{O}}$ is a derivation with respect to the product $\nabla$.
   Hence $(\mathcal{O}, d^{\mathcal{O}}, \nabla)$ is a differential graded algebra
\end{proof}\\

\section{Differential graded Hopf algebra structure on cosimplicial $\Sigma$-free operads.} In this section, we focus on
the particular case of  free cosimplicial $\Sigma$-dg operads
concentrated in degree zero ($((\mathcal{O}(n))_{r}= 0)_{n\geq 0}^{r>
0}$ and $d=0$).
\subsection{Differential graded coalgebra structure on free cosimplicial $\Sigma$-operads}
This subsection is devoted to the construction of a coaugmented
differential graded coalgebra structure on a free cosimplicial
$\Sigma$-operads $\mathcal{O}$ together with its representation by
generators subject to some relations.\\
For a given integer $k\ge 1$, let us consider the following linear
maps:
\begin{center}
    $\widetilde{s}^{k-j}:
    \mathcal{O}(k)\longrightarrow \mathcal{O}(j)$
\end{center}
and
\begin{center}
    $\widetilde{s}_1^{j}: \mathcal{O}(k)\longrightarrow
    \mathcal{O}(k-j)$
\end{center}
defined on the representatives of the generators as follows:\\ for
any representative $\hat{y}$ of a given generator $y,$ an operation
with $k$ inputs,
\begin{center}

    $$\widetilde{s}^{k-j}\hat{y} = \left\{\begin{array}{ll}s_{j+1}^{j}\circ
        s_{j+2}^{j+1}\circ \cdots \circ s_{k-1}^{k-2}\circ s_{k}^{k-1}\hat{y}, \quad \mbox{if} \quad 0 \leq j<k,\\
        \hat{y},  \quad if \quad j= k. \end{array}
        \right.$$\\
    $$\widetilde{s}_1^{j}\hat{y} =
    \left\{\begin{array}{ll}s_{1}^{k-j}\circ
        s_{1}^{k-j+1}\circ \cdots \circ s_{1}^{k-2}\circ s_{1}^{k-1}\hat{y}, \quad \mbox{if} \quad 0< j\leq k,\\
        \hat{y},  \quad if \quad j= 0.
        \end{array} \right.$$
\end{center}
In addition, these maps satisfy the following relations:\\
For $\hat{x}\in \mathcal{O}(n), ~\hat{y}\in \mathcal{O}(m), ~1\leq i
\leq n, ~1\leq j \leq n + m -1,$
\begin{center}

$$\widetilde{s}^{n+m-j-1}(\hat{x}\circ_{i}\hat{y}) =
\left\{\begin{array}{lll}\widetilde{s}^{n-j}(\hat{x}), \quad \quad \mbox{if}\quad \quad 1 \leq j < i \leq n,\\
       (\widetilde{s}^{n-i}(\hat{x}))\circ_{i} (\widetilde{s}^{m-1}(\hat{y})) ,  \quad \mbox{if} \quad 1 \leq j = i \leq n,\\
(\widetilde{s}^{n-(j+1-m)}(\hat{x}))\circ_{i} \hat{y} , \quad \quad
\mbox{if} \quad 1 \leq  i < j \leq n+m-1,
\end{array}
 \right.$$
\end{center}

$$\mbox{and} \quad \widetilde{s}_{1}^{j}(\hat{x} \circ_{i} \hat{y})= (\widetilde{s}_{1}^{i-1}(\hat{x})) \circ_{1} (\widetilde{s}_{1}^{j-i+1}(\hat{y})).$$
Observe that since the above defined maps are linear by extension, then the following map is also linear by extension.\\
For all $k\geq 0,$
$$\begin{array}{ccc}
        \Delta:\mathcal{O}(k)& \longrightarrow &
        (\mathcal{O}\otimes\mathcal{O})(k)=
        \bigoplus_{j=0}^{n}\mathcal{O}(j)\otimes\mathcal{O}(k-j)\\ \hat{y} &
        \longmapsto &
        \sum_{j=0}^{k}{\widetilde{s}^{k-j}\hat{y}\otimes\widetilde{s}_1^j \hat{y}}.
    \end{array}$$

It is well known that the triple $((\mathcal{O}, \delta),
\odot,\eta)$ is a
graded  differential unitary algebra, with unit $\eta$ and product $\odot$ (see
\cite{B-T}). So we study, through the following
statements, some relations between the linear map $\Delta$ and the
preceding algebra structure.

\begin{Pro}
 Let $\mathcal{O}$ be a free connected multiplicative operad with multiplication $\mu$ and base point $1_{0}$ such that $\mu \circ_{1} 1_{0} =
1_{\mathcal{O}} = \mu \circ_{2} 1_{0}.$  The above constructed
linear map $\Delta$  is  a homomorphism
 of algebras.
\end{Pro}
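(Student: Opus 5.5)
Our plan is to check two things: $\Delta$ is unital, and $\Delta(x\odot y)=\Delta(x)\odot\Delta(y)$ for $x\in\mathcal{O}(p)$ and $y\in\mathcal{O}(q)$. On the right-hand side, $\mathcal{O}\otimes\mathcal{O}$ carries the tensor product algebra structure, so that $(a\otimes b)\odot(c\otimes d)=(a\odot c)\otimes(b\odot d)$; in the non-dg setting there is no Koszul sign. Since $\Delta$, $\widetilde{s}^{k-j}$ and $\widetilde{s}_1^j$ are defined on representatives of generators and extended linearly, it suffices to verify both identities on representatives $\hat x,\hat y$ and then extend by linearity.

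\textbf{Unit.} The unit is $\eta(1_{\mathbb{K}})=1_0\in\mathcal{O}(0)$. For $k=0$ the sum defining $\Delta$ has the single term $j=0$, where both maps are the identity. Hence $\Delta(1_0)=1_0\otimes 1_0$, which is the unit of $\mathcal{O}\otimes\mathcal{O}$.

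\textbf{Multiplicativity.} We write $\hat x\odot\hat y=(\mu\circ_2\hat y)\circ_1\hat x\in\mathcal{O}(p+q)$ and expand
$$\Delta(\hat x\odot\hat y)=\sum_{j=0}^{p+q}\widetilde{s}^{p+q-j}(\hat x\odot\hat y)\otimes\widetilde{s}_1^{j}(\hat x\odot\hat y).$$
The key step is to show that, for $j=a+b$ with $0\le a\le p$ and $0\le b\le q$, the composite codegeneracies split as
$$\widetilde{s}^{p+q-j}(\hat x\odot\hat y)=\widetilde{s}^{p-a}\hat x\odot\widetilde{s}^{q-b}\hat y,\qquad \widetilde{s}_1^{j}(\hat x\odot\hat y)=\widetilde{s}_1^{a}\hat x\odot\widetilde{s}_1^{b}\hat y.$$
Concretely, $\widetilde{s}^{p+q-j}$ plugs $1_0$ into the last $p+q-j$ inputs, and $\widetilde{s}_1^{j}$ plugs $1_0$ into the first $j$ inputs. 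To prove the splitting we use the composition rules for $\widetilde{s}$ and $\widetilde{s}_1$ with respect to $\circ_i$ stated just before the definition of $\Delta$, or equivalently the recursive codegeneracy formulas of Proposition 3.1(a), applied to $\mu\circ_2\hat y$ and then to $(\mu\circ_2\hat y)\circ_1\hat x$. The associativity axioms of Definition 2.1(i) let us move the insertions of $1_0$ past $\mu$. When all inputs of one factor are killed, that factor becomes an element of $\mathcal{O}(0)\cong\mathbb{K}1_0$ by connectedness. The relation $\mu\circ_1 1_0=1_{\mathcal{O}}=\mu\circ_2 1_0$ then collapses $\mu$, so the result is again of the form $\widetilde{s}\hat x\odot\widetilde{s}\hat y$, using $1_0\odot z=z=z\odot 1_0$ up to the scalar.

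\textbf{Matching the sums.} Expanding $\Delta\hat x\odot\Delta\hat y$ yields a double sum over pairs $(a,b)$. To compare it with the single sum over $j$ above, we reorganise the double sum by $j=a+b$. This regrouping, and the identification of the degenerate terms where $a$ or $b$ sits at the boundary, are where connectedness and the unit relation for $\mu$ are used.

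\textbf{Main obstacle.} We expect the hardest step to be this index bookkeeping, in particular justifying that every term of $\Delta(\hat x\odot\hat y)$ is accounted for exactly once by the pairs $(a,b)$. Note that for a fixed $j$ there are in general several pairs $(a,b)$ with $a+b=j$. Inserting $1_0$ into the last $p+q-j$ slots of $\hat x\odot\hat y$ kills the inputs of $\hat y$ first, so naively only the splitting with $b=\max(0,j-p)$ appears. We would first try to reconcile this by carefully reading the connectedness degenerations $|_S$ from Remark 2.7(ii). If a term-by-term bijection fails, the fallback is to reduce both sides to the case of the generating operation $\mu$ itself, since $\mathcal{O}$ is free and $\odot$ is built from $\mu$. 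We would then verify $\Delta(\mu)=\Delta(\mu\circ_2 1_{\mathcal{O}})$ directly on $\mathcal{O}(2)$, and propagate to arbitrary $\hat x,\hat y$ through the compatibility formulas of $\widetilde{s}$ and $\widetilde{s}_1$ with $\circ_i$.
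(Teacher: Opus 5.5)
Your unit check is fine. The problem is the multiplicativity step: the obstacle you flag at the end is not bookkeeping but a genuine obstruction. With the tensor-product algebra structure on $\mathcal{O}\otimes\mathcal{O}$ (the one both you and the paper use), the statement is false.

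Your splitting cannot hold for all $(a,b)$ with $a+b=j$. The left side $\widetilde{s}^{p+q-j}(\hat x\odot\hat y)$ depends only on $j$. It plugs $1_0$ into the last $p+q-j$ inputs, so it exhausts $\hat y$ before touching $\hat x$. Hence the splitting holds only for $(a,b)=(j,0)$ when $j<p$, and $(a,b)=(p,j-p)$ when $j\ge p$. So $\Delta(\hat x\odot\hat y)$ has $p+q+1$ terms, indexed by this L-shaped path. By contrast, $\Delta(\hat x)\odot\Delta(\hat y)$ has $(p+1)(q+1)$ terms, and the extra ones with $a<p$, $b>0$ are in general nonzero. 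No reading of the degeneracies $|_S$ removes them.

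The smallest case already fails. Take $\hat x=\hat y=1_{\mathcal{O}}$, with $1_{\mathcal{O}}$ and $\mu$ among the generators (as for the permutation basis of $\mathcal{A}_{ss}$, where they are $(1)$ and $(12)$). Then $1_{\mathcal{O}}\odot 1_{\mathcal{O}}=\mu$ and $\Delta(1_{\mathcal{O}})=1_0\otimes 1_{\mathcal{O}}+1_{\mathcal{O}}\otimes 1_0$. Using $\mu\circ_1 1_0=\mu\circ_2 1_0=1_{\mathcal{O}}$ and $1_0\odot 1_{\mathcal{O}}=1_{\mathcal{O}}\odot 1_0=1_{\mathcal{O}}$, one gets
\[
\Delta(\mu)=1_0\otimes\mu+1_{\mathcal{O}}\otimes 1_{\mathcal{O}}+\mu\otimes 1_0,\qquad
\Delta(1_{\mathcal{O}})\odot\Delta(1_{\mathcal{O}})=1_0\otimes\mu+2\,1_{\mathcal{O}}\otimes 1_{\mathcal{O}}+\mu\otimes 1_0 .
\]
With Koszul signs for the arity grading, the middle coefficient becomes $0$ instead; either way it is not $1$.

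In $\mathcal{A}_{ss}$ this reads $\Delta_{MR}((12))\neq\Delta_{MR}((1))\star\Delta_{MR}((1))$. Deconcatenation is compatible with the shifted \emph{shuffle} product of Malvenuto--Reutenauer, not with shifted concatenation. Your fallback does not help either: $\Delta(\mu)=\Delta(\mu\circ_2 1_{\mathcal{O}})$ is a tautology, and the meaningful check on $\mathcal{O}(2)$ is the one displayed above, which fails.

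The paper's proof breaks at exactly the same place. It correctly finds one pair $(i,t)$ for each $j$, namely $(j,0)$ or $(n,j-n)$. It then silently replaces the $(n+u+1)$-term sum over $j$ by the full double sum over $0\le i\le n$, $0\le t\le u$. What your term-by-term analysis actually proves is the unital infinitesimal bialgebra relation of Loday--Ronco, valid for generators with $\hat x(1_0,\ldots,1_0)=1_0$ (e.g.\ permutations):
\[
\Delta(x\odot y)=\Delta(x)\cdot y+x\cdot\Delta(y)-x\otimes y,\qquad (a\otimes b)\cdot y=a\otimes(b\odot y),\quad x\cdot(a\otimes b)=(x\odot a)\otimes b.
\]
That relation, not multiplicativity, is what can be established along your route.
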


\begin{proof}
We just have  to check commutativity of the following diagram:
    $$\xymatrix{
        \mathcal{O}\otimes \mathcal{O} \ar [d]^{\Delta\otimes \Delta} \ar[r]^{\odot}&   \mathcal{O} \ar [r]^{\Delta}& \mathcal{O}\otimes \mathcal{O} \\
        \mathcal{O} \otimes \mathcal{O}\otimes \mathcal{O}\otimes\mathcal{O}  \ar@{-}[rr]^{}  &  ^{\quad\quad \quad\quad\quad\quad Id\otimes T\otimes Id} & \quad \mathcal{O} \otimes \mathcal{O}\otimes \mathcal{O}\otimes\mathcal{O}  \ar[u]_{\odot\otimes\odot}
    } $$
that is, $[(\odot\otimes\odot)\circ(Id\otimes $T$\otimes
Id)\circ(\Delta\otimes\Delta)]=\Delta\circ\odot$ where $T$ is the
interchange homomorphism.\\ To achieve this purpose, consider
$x\in\mathcal{O}(n)$, $y\in\mathcal{O}(u)$ and denote by $\hat{x},$
$\hat{y},$ their respective representatives. Set $k= n + u.$ We
have
\begin{align*}
    (\Delta\circ\odot)(\hat{x}\otimes \hat{y})&=\Delta[\mu(\hat{x},\hat{y})]\\
 &=\sum\limits_{j=0}^k\widetilde{s}^{k-j}(\mu(\hat{x},\hat{y}))\otimes \widetilde{s}_1^{j}(\mu(\hat{x},\hat{y})).
\end{align*}
Observe that each term of the above sum can be broken down as follows:\\
if $j\in\{0, 1,\cdots, n-1\}$, then
$$\widetilde{s}^{k-j}(\mu(\hat{x},\hat{y}))=\mu(\widetilde{s}^{n-i}(\hat{x}),\widetilde{s}^{u-t}(\hat{y}))$$ and
$$\widetilde{s}_1^{j}(\mu(\hat{x},\hat{y}))=\mu(\widetilde{s}_1^{i}(\hat{x}),\widetilde{s}_1^{t}(\hat{y}))$$
where $i= j$ and $t= 0.$\\
If $j\in\{n, n+1,\cdots, n + u= k\}$, then
$$\widetilde{s}^{k-j}(\mu(\hat{x},\hat{y}))=\mu(\widetilde{s}^{n-i}(\hat{x}),\widetilde{s}^{u-t}(\hat{y}))$$ and
$$\widetilde{s}_1^{j}(\mu(\hat{x},\hat{y}))=\mu(\widetilde{s}_1^{i}(\hat{x}),\widetilde{s}_1^{t}(\hat{y}))$$
where $i=n$ and $t=j-n.$\\
Thus the above sum becomes:
\begin{align*}
    (\Delta\circ\odot)(x\otimes \hat{y})&=\Delta[\mu(\hat{x},\hat{y})]\\
    &=\sum\limits_{j=0}^k \widetilde{s}^{k-j}(\mu(\hat{x},\hat{y}))\otimes \widetilde{s}_1^{j}(\mu(\hat{x},\hat{y}))\\
    &=\sum\limits_{j=0}^k \mu(\widetilde{s}^{n-i}(\hat{x}),\widetilde{s}^{u-t}(\hat{y}))\otimes \mu(\widetilde{s}_1^{i}(\hat{x}),\widetilde{s}_1^{t}(\hat{y}))\\
    &=\sum\limits_{i=0}^n \sum\limits_{t=0}^u \mu(\widetilde{s}^{n-i}(\hat{x}),\widetilde{s}^{u-t}(\hat{y}))\otimes \mu(\widetilde{s}_1^{i}(\hat{x}),\widetilde{s}_1^{t}(\hat{y}))\\
    &=(\odot\otimes\odot)(\sum\limits_{i=0}^n\sum\limits_{t=0}^u \widetilde{s}^{n-i}(\hat{x})\otimes\widetilde{s}^{u-t}(\hat{y})\otimes \widetilde{s}_1^{i}(\hat{x})\otimes\widetilde{s}_1^{t}(\hat{y}))\\
    &=[(\odot\otimes\odot)\circ(Id\otimes T\otimes Id)](\sum\limits_{i=0}^n\sum\limits_{t=0}^u \widetilde{s}^{n-i}(\hat{x})\otimes\widetilde{s}_1^{i}(\hat{x})\otimes \widetilde{s}^{u-t}(\hat{y})\otimes\widetilde{s}_1^{t}(\hat{y}))\\
    &=[(\odot\otimes\odot)\circ(Id\otimes T\otimes Id)](\sum\limits_{i=0}^n\widetilde{s}^{n-i}(\hat{x})\otimes\widetilde{s}_1^{i}(\hat{x}))\otimes(\sum\limits_{t=0}^u \widetilde{s}^{u-t}(\hat{y})\otimes\widetilde{s}_1^{t}(\hat{y}))\\
    &=[(\odot\otimes\odot)\circ(Id\otimes T\otimes Id)\circ(\Delta\otimes \Delta)](\hat{x}\otimes \hat{y}).
\end{align*}
This completes the proof.
\end{proof}

\begin{Pro} Let $\mathcal{O}$ be a free connected multiplicative operad with multiplication $\mu$ and
 base point $1_{0}$ such that $\mu \circ_{1} 1_{0} =
1_{\mathcal{O}} = \mu \circ_{2} 1_{0}.$ The triple $((\mathcal{O},
\delta), \Delta, \epsilon)$ is a coaugmented differential graded
coalgebra.
\end{Pro}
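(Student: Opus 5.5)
We prove three things in turn: that $\Delta$ is coassociative, that it is counital for a suitable counit $\epsilon$ with coaugmentation $\eta$, and that $\delta$ is a coderivation for $\Delta$. Everything is first checked on representatives $\hat{y}\in\mathcal{O}(k)$ of generators and then extended by linearity, exactly as $\Delta$ was defined.

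\textbf{Counit and coaugmentation.} Let $\epsilon:\mathcal{O}\to\mathbb{K}$ vanish on $\mathcal{O}(k)$ for $k\geq 1$. On $\mathcal{O}(0)$ we take the inverse of the connectedness isomorphism $\eta:\mathbb{K}\cong\mathcal{O}(0)$; the coaugmentation is $\eta$ itself.

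\emph{Counitality.} Only two summands of $\Delta\hat{y}$ survive $\epsilon\otimes Id$ and $Id\otimes\epsilon$:
\begin{itemize}
\item the $j=0$ term $\widetilde{s}^{k}\hat{y}\otimes\hat{y}$, whose left factor lies in $\mathcal{O}(0)\cong\mathbb{K}$;
\item the $j=k$ term $\hat{y}\otimes\widetilde{s}_1^{k}\hat{y}$.
\end{itemize}
Here $\widetilde{s}^{k}\hat{y}=\hat{y}(1_0,\dots,1_0)$, i.e.\ $\hat{y}$ with all inputs plugged by $1_0$. Connectedness makes this a scalar, and for a generator representative we normalise it to $1$. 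That normalisation is the one point to watch: if the complete contraction is not $1$, counitality must be stated up to this scalar.

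\emph{Coaugmentation.} We check $\Delta(1_0)=1_0\otimes 1_0$ (only $j=0$ occurs, since $k=0$) and $\epsilon(1_0)=1$. Hence $\eta$ is a coalgebra map, and $\epsilon\circ\eta=id_{\mathbb{K}}$.

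\textbf{Coassociativity.} We expand both sides of $(\Delta\otimes Id)\Delta=(Id\otimes\Delta)\Delta$ on $\hat{y}\in\mathcal{O}(k)$. Each side becomes a sum over $0\le a\le a+b\le k$ of triple tensors.
\begin{itemize}
\item The left side produces
$$\widetilde{s}^{a+b-a}\widetilde{s}^{k-a-b}\hat{y}\otimes\widetilde{s}_1^{a}\widetilde{s}^{k-a-b}\hat{y}\otimes\widetilde{s}_1^{a+b}\hat{y}.$$
\item The right side produces
$$\widetilde{s}^{k-a}\hat{y}\otimes\widetilde{s}^{k-a-b}\widetilde{s}_1^{a}\hat{y}\otimes\widetilde{s}_1^{b}\widetilde{s}_1^{a}\hat{y}.$$
\end{itemize}
Matching the terms reduces to three cosimplicial-type identities:
$$\widetilde{s}^{p}\widetilde{s}^{q}=\widetilde{s}^{p+q},\qquad \widetilde{s}_1^{b}\widetilde{s}_1^{a}=\widetilde{s}_1^{a+b},\qquad \widetilde{s}_1^{a}\widetilde{s}^{c}=\widetilde{s}^{c}\widetilde{s}_1^{a}.$$
The first two hold by definition, since both sides are compositions of the same strings of last (resp.\ first) codegeneracies. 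The third, the commutation of "kill the last $c$ inputs" with "kill the first $a$ inputs", follows from the cosimplicial relation $s_i s_j=s_{j-1}s_i$ for $i<j$ of Proposition 3.1. On composites $\hat{x}\circ_i\hat{y}$ it is instead obtained from the displayed relations for $\widetilde{s}^{\,\cdot}$ and $\widetilde{s}_1^{\,\cdot}$ on $\hat{x}\circ_i\hat{y}$, by induction on the number of compositions.

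\textbf{Differential compatibility.} In this section $\mathcal{O}$ is concentrated in degree $0$ with $d=0$, so the only differential is $\delta$. We must show
$$\Delta\delta=(\delta\otimes Id+Id\otimes\delta)\Delta,$$
with the Koszul sign $(-1)^{j}$ on the second term when the left factor lies in arity $j$. We argue inductively on generators and use that $\Delta$ is an algebra map (Proposition 4.1).
\begin{itemize}
\item Write $D_0x=\mu\circ_2x$ and $D_{n+1}x=\mu\circ_1x$ as $\odot$-products with $1_{\mathcal{O}}$. Proposition 4.1 then computes $\Delta D_0$ and $\Delta D_{n+1}$ from $\Delta(1_{\mathcal{O}})=1_0\otimes 1_{\mathcal{O}}+1_{\mathcal{O}}\otimes 1_0$.
\item For $1\le i\le n$ the coface $D_i=-\circ_i\mu$ is handled by the relations for $\widetilde{s}^{\,\cdot}(\hat{x}\circ_i\mu)$ and $\widetilde{s}_1^{\,\cdot}(\hat{x}\circ_i\mu)$. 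Here $\widetilde{s}^{\,\cdot}$ and $\widetilde{s}_1^{\,\cdot}$ applied to $\mu$ give $1_{\mathcal{O}}$ or $1_0$, by $\mu\circ_11_0=1_{\mathcal{O}}=\mu\circ_21_0$.
\end{itemize}
Summing with the signs $(-1)^i$, the terms should telescope into the required expression.

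\textbf{Main obstacle.} I expect this last step to be the real difficulty. The split at arity $j$ produces boundary terms: $D_j$ acting at the junction shows up on both sides of the tensor. These must cancel against the outer cofaces $D_0$ and $D_{n+1}$, and getting the grading and sign convention exactly right is where I would spend the effort. My first attempt would be to verify the identity by hand for $\hat{y}=\mu$ and $\hat{y}=1_{\mathcal{O}}$, fix the signs from those cases, and then extend by the algebra-map property.
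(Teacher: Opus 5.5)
\textbf{The gap is the differential-compatibility step, and it cannot be closed.} You do not actually prove it (``the terms should telescope''), and both the tool you plan to use and the identity you are aiming for turn out to be false.

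\emph{The tool.} $\Delta$ is not multiplicative for $\odot$, so you cannot rely on Proposition 4.1. Take $\mathcal{A}_{ss}$ with the permutation basis and $\mu=12$. Then $1_{\mathcal{O}}\odot 1_{\mathcal{O}}=\mu$ and $\Delta(\mu)=1_0\otimes\mu+1_{\mathcal{O}}\otimes 1_{\mathcal{O}}+\mu\otimes 1_0$. On the other hand, $(\odot\otimes\odot)(Id\otimes T\otimes Id)(\Delta\otimes\Delta)(1_{\mathcal{O}}\otimes 1_{\mathcal{O}})$ contains $1_{\mathcal{O}}\otimes 1_{\mathcal{O}}$ with coefficient $2$ (or $0$ if you put Koszul signs in $T$).

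\emph{The target.} The test case you propose already refutes the coderivation identity. We have $\delta(1_{\mathcal{O}})=\mu\circ_2 1_{\mathcal{O}}-1_{\mathcal{O}}\circ_1\mu+\mu\circ_1 1_{\mathcal{O}}=\mu$ and $\delta(1_0)=\mu\circ_2 1_0-\mu\circ_1 1_0=0$. The latter is forced anyway by $\delta^2=0$, since $\mathcal{A}_{ss}(1)=\mathbb{K}1_{\mathcal{O}}$ and $\delta(1_{\mathcal{O}})=\mu\neq 0$. With $\Delta(1_{\mathcal{O}})=1_0\otimes 1_{\mathcal{O}}+1_{\mathcal{O}}\otimes 1_0$ this gives
\[
\Delta\delta(1_{\mathcal{O}})=1_0\otimes\mu+1_{\mathcal{O}}\otimes 1_{\mathcal{O}}+\mu\otimes 1_0,
\qquad
(\delta\otimes Id+Id\otimes\delta)\Delta(1_{\mathcal{O}})=\pm 1_0\otimes\mu\pm\mu\otimes 1_0,
\]
whatever sign convention you use. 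The junction term $1_{\mathcal{O}}\otimes 1_{\mathcal{O}}$ you anticipated has nothing to cancel against. So $\delta$ is not a coderivation of $\Delta$, and the statement fails in its differential part already for the paper's own example.

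The paper's part (iii) does not supply the missing argument either. It expands $\Delta(\delta\hat{x})$ as $\sum_k\tilde{s}^{n-k+1}(\delta\hat{x})\otimes\tilde{s}_1^{k}(\delta\hat{x})$, treating the linearly extended $\Delta$ as if it were quadratic in its argument, and then simply asserts the identity.

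\textbf{The parts that do work, under a hidden hypothesis.} In the coassociativity computation you apply the defining formula of $\Delta$ to $\widetilde{s}^{k-j}\hat{y}$ and $\widetilde{s}_1^{a}\hat{y}$. Since $z\mapsto\sum_j\widetilde{s}^{\,\cdot}z\otimes\widetilde{s}_1^{j}z$ is not linear, this is legitimate only if the chosen generators are closed under the codegeneracies, as permutations are in $\mathcal{A}_{ss}$. The paper's part (ii) makes the same tacit assumption.

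Granting that assumption, your reduction to three identities is a cleaner argument than the paper's term matching $P_{i,j}=Q_{j,i}$. The commutation of $\widetilde{s}_1^{a}$ with $\widetilde{s}^{c}$ is just the parallel associativity axiom. The same assumption also settles your counit concern: the full contraction of a generator is then the generator $1_0$ of $\mathcal{O}(0)$.

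Without it, counitality does not hold ``up to a scalar''; it simply fails for any generator with $\hat{y}(1_0,\dots,1_0)\neq 1_0$. The paper never checks counitality at all.
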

\begin{proof}
 \indent $(i)$ Since $\mathcal{O}$ is a free pointed operad, the
injective homomorphism $\mathbb{K}\stackrel{\eta}\longrightarrow
\mathcal{O}$ is the coaugmentation and its retraction,
$\mathcal{O}\stackrel{\epsilon}\longrightarrow\mathbb{K}$ is the
counit.\\

 \indent $(ii)$ Next we verify the coassociative property of the coproduct, $\Delta.$ This is equivalent to verifying that the following diagram
        $$\xymatrix{
            \mathcal{O}\ar[d]_{\Delta} \ar[r]^{\Delta}& \mathcal{O}\otimes\mathcal{O} \ar[d]^{id\otimes\Delta}  &\\
            \mathcal{O}\otimes\mathcal{O} \ar[r]^{\Delta\otimes id}& (\mathcal{O}\otimes\mathcal{O})\otimes\mathcal{O} &\\
        }$$
         commutes, that is,  $(id\otimes\Delta)\circ\Delta=(\Delta\otimes id)\circ\Delta.$ \\ Indeed, for $y\in\mathcal{O}(k)$
          and $\hat{y}$ its representative, we have \\
        \begin{align*}
            [(id\otimes\Delta)\circ \Delta](\hat{y})&=(id\otimes\Delta)(\sum\limits_{i=0}^k\widetilde{s}^{k-i}\hat{y}\otimes\widetilde{s}_1^{i}\hat{y})\\
            &=\sum\limits_{i=0}^k\widetilde{s}^{k-i}\hat{y}\otimes\Delta(\widetilde{s}_1^{i}\hat{y})\\
            &=\sum\limits_{i=0}^k\widetilde{s}^{k-i}\hat{y}\otimes\sum\limits_{j=0}^{k-i}
            \widetilde{s}^{k-i-j}\widetilde{s}_1^{i}\hat{y}\otimes\widetilde{s}_1^{j}
            \widetilde{s}_1^{i}\hat{y}\\
            &=\sum\limits_{i=0}^k\sum\limits_{j=i}^{k}\widetilde{s}^{k-i}\hat{y}\otimes
            \widetilde{s}^{k-j}\widetilde{s}_1^{i}\hat{y}\otimes\widetilde{s}_1^{j-i}
            \widetilde{s}_1^{i}\hat{y}\\
            &=\sum\limits_{i=0}^k\sum\limits_{j=i}^{k}P_{i,j}.
        \end{align*}
        Similarly, we have:\\
        \begin{align*}
            [(\Delta\otimes id)\circ \Delta](\hat{y})&=(\Delta\otimes id)(\sum\limits_{i=0}^k\widetilde{s}^{k-i}\hat{y}\otimes\widetilde{s}_1^{i}\hat{y})\\
            &=\sum\limits_{i=0}^k\Delta(\widetilde{s}^{k-i}\hat{y})\otimes\widetilde{s}_1^{i}\hat{y}\\
            &=\sum\limits_{i=0}^k\sum\limits_{j=0}^{i}\widetilde{s}^{i-j}\widetilde{s}^{k-i}\hat{y}\otimes
            \widetilde{s}_1^{j}\widetilde{s}^{k-i}\hat{y}\otimes\widetilde{s}_1^{i}\hat{y}\\
            &=\sum\limits_{i=0}^k\sum\limits_{j=0}^{i}Q_{i,j}.
        \end{align*}
        Observe that the exact number of terms of each of the above two summations
        is:
        $$\sum\limits_{i=0}^{k}k-i+1=\sum\limits_{i=1}^{k+1}i=\frac{(k+1)(k+2)}{2}.$$

        Moreover, one can verify  the following equalities by direct computation:

        \begin{enumerate}
            \item[$a)$] $P_{i,i}=Q_{i,i},\quad 0\le i\le k$,
            \item[$b)$] $P_{0,j}=Q_{n,j}, \quad 1\le j\le k-1$,
            \item[$c)$] $P_{i,j}=Q_{j,i}, \quad i<j$.
        \end{enumerate}
       Hence, we obtain the identity  $(id\otimes\Delta)\circ\Delta=(\Delta\otimes id)\circ\Delta,$ and
        therefore the coproduct $\Delta$ is coassociative.\\
 \indent $(iii)$  Finally, we show that the coboundary operator $\{\mathcal{O}(n)\stackrel{\delta^{n}}\longrightarrow \mathcal{O}(n+1): n\geq 0\}$ is
        a coderivation with respect to the coproduct $\Delta.$\\

Before beginning this part of the proof, we recall below some
properties of the cosimplicial structure associated with any pointed multiplicative
operad $\mathcal{O}$:
$$s_i^nD_j^{n+1}=\left\{\begin{array}{lll}
    D_{j-1}^{n+1}s_i^n \quad if \quad i<j,\\
    id \quad if \quad i\in\{j,j+1\},\\
    D_{j}^{n+1}s_{i-1}^n \quad if \quad i>j+1.
   \end{array}
   \right.$$
Thus, for $x\in \mathcal{O}(n)$ and $\hat{x}$ a representative of $x,$ we
have the following statements:
\begin{enumerate}
    \item[(a)] if $i< k$, then
    $$\tilde{s}^{n-k+1}D_i^{n+1}(\hat{x})=s_{k+1}s_{k+2}\cdots s_{n+1}D_i^{n+1}(\hat{x})=D_i^{k+1}s_{k+1}s_{k+2}\cdots s_{n}(\hat{x})=
    D_i^{k}\tilde{s}^{n-k}(\hat{x})$$
    and $$\tilde{s}_1^k D_i^{n+1}(\hat{x})=\underbrace{s_1s_1\cdots s_1}_{k\mbox{-}times}D_i^{n+1}(\hat{x})=\tilde{s}_1^k(\hat{x}),$$
    \item[(b)] if $i\ge k,$ then
    $$\tilde{s}^{n-k+1}D_i^{n+1}(\hat{x})=\tilde{s}^{n-k}(\hat{x})$$
    and $$\tilde{s}_1^k D_i^{n+1}(\hat{x})=\underbrace{s_1s_1\cdots s_1}_{k\mbox{-}times}D_i^{n+1}(\hat{x})=D_{i-k}^{n-k+1}\tilde{s}_1^k(\hat{x}).$$
\end{enumerate}
Moreover, we have on one hand:
\begin{align*}
    [(\delta\otimes id+id\otimes \delta)\circ\Delta](\hat{x})&=(\delta\otimes id+id\otimes \delta)(\sum\limits_{j=0}^n\tilde{s}^{n-j}(\hat{x})\otimes\tilde{s}_1^{j}(\hat{x})\\
    &=\sum\limits_{j=0}^n[\delta(\tilde{s}^{n-j}(\hat{x}))\otimes\tilde{s}_1^{j}(\hat{x})+(-1)^j \tilde{s}^{n-j}(x)\otimes \delta(\tilde{s}_1^{j}(\hat{x}))]\\
    &=\sum\limits_{j=0}^n[(\sum\limits_{i=0}^{j+1}D_i^{j+1}(\tilde{s}^{n-j}(\hat{x})))\otimes\tilde{s}_1^{j}(\hat{x})+(-1)^j \tilde{s}^{n-j}(\hat{x})\otimes (\sum\limits_{i=0}^{n-j+1}D_i^{n-j+1}(\tilde{s}_1^{j}(\hat{x})))]\\
    &=\sum\limits_{j=0}^n[(\sum\limits_{i=0}^{j+1}D_i^{j+1}(\tilde{s}^{n-j}(\hat{x})))\otimes\tilde{s}_1^{j}(\hat{x})]+\sum\limits_{j=0}^n[(-1)^j \tilde{s}^{n-j}(\hat{x})\\
    &~~~~\otimes (\sum\limits_{i=0}^{n-j+1}D_i^{n-j+1}(\tilde{s}_1^{j}(\hat{x})))].
\end{align*}
On the other hand, we also have:
\begin{align*}
    (\Delta\circ \delta)(\hat{x})&=\sum\limits_{k=0}^{n+1}\tilde{s}^{n-k+1}(\delta(\hat{x}))\otimes\tilde{s}_1^{k}(\delta(\hat{x}))\\
    &=\sum\limits_{k=0}^{n+1}\tilde{s}^{n-k+1}(\sum\limits_{i=0}^{n+1}(-1)^iD_i^{n+1}(\hat{x}))\otimes\tilde{s}_1^{k}(\sum\limits_{i=0}^{n+1}(-1)^iD_i^{n+1}(\hat{x}))\\
    &=\sum\limits_{k=0}^{n+1}[(\sum\limits_{i=0}^{n+1}(-1)^i\tilde{s}^{n-k+1}D_i^{n+1}(\hat{x}))\otimes (\sum\limits_{i=0}^{n+1}(-1)^i\tilde{s}_1^{k}D_i^{n+1}(\hat{x}))]\\
    &=\sum\limits_{k=0}^n[(\sum\limits_{i=0}^{k+1}D_i^{k+1}(\tilde{s}^{n-k}(\hat{x})))\otimes\tilde{s}_1^{k}(\hat{x})]+\sum\limits_{k=0}^n[(-1)^j \tilde{s}^{n-k}(\hat{x})\\&~~~~\otimes (\sum\limits_{i=0}^{n-k+1}D_i^{n-k+1}(\tilde{s}_1^{k}(\hat{x})))].\\
\end{align*}
So, putting the above established identities together with the
statements $(a)$ and $(b)$ yields: $\Delta\circ \delta
=[(\delta\otimes id + id\otimes \delta)\circ\Delta]$
\end{proof}\\
\subsection{ Differential graded Hopf algebra structure on free connected
multiplicative operads and some induced structures.} In this subsection, we endow a free connected multiplicative operad with a differential graded Hopf algebra structure and investigate some of the resulting structures.

\begin{Th}
    Let $\mathcal{O}$ be a connected multiplicative operad with multiplication $\mu$ and
 base point $1_{0}$ such that $\mu \circ_{1} 1_{0} =
1_{\mathcal{O}} = \mu \circ_{2} 1_{0}.$  The quintuplet
$((\mathcal{O}, \delta), \odot,\eta, \Delta,\epsilon)$  is a
differential graded Hopf algebra.
\end{Th}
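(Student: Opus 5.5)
The plan is to assemble the theorem from the pieces already established in Section 4, and to add the one ingredient not yet treated: the antipode. The bialgebra part comes first. By the remark preceding the first proposition of Subsection 4.1 (citing \cite{B-T}), $((\mathcal{O},\delta),\odot,\eta)$ is a unitary differential graded algebra. By the proposition stating that $\Delta$ is a homomorphism of algebras, $\Delta\circ\odot=(\odot\otimes\odot)\circ(Id\otimes T\otimes Id)\circ(\Delta\otimes\Delta)$. By the proposition that $((\mathcal{O},\delta),\Delta,\epsilon)$ is a coaugmented dg coalgebra, $\Delta$ is coassociative and counital, and $\delta$ is a coderivation.

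It then remains to check the compatibilities involving the unit and counit. I would verify $\Delta\circ\eta=\eta\otimes\eta$ and $\epsilon\circ\odot=\epsilon\otimes\epsilon$ directly. Since $\eta(1_{\mathbb K})=1_0\in\mathcal{O}(0)$, we get $\Delta(1_0)=\widetilde{s}^{0}1_0\otimes\widetilde{s}^{0}_11_0=1_0\otimes 1_0$. The map $\epsilon$ is the projection onto $\mathcal{O}(0)\cong\mathbb{K}$. Because $x\odot y\in\mathcal{O}(p+q)$ lies in arity $0$ only when $p=q=0$, and there $1_0\odot 1_0=\mu(1_0,1_0)=(\mu\circ_21_0)\circ_11_0=1_{\mathcal O}\circ_1 1_0=1_0$, multiplicativity of $\epsilon$ follows. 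I would also note that $\delta$ is a derivation for $\odot$ (again from \cite{B-T}) and a coderivation for $\Delta$, with $\delta\eta=0$ and $\epsilon\delta=0$ in the relevant arities. Together these give a differential graded bialgebra.

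For the Hopf structure, I would use the grading by arity, $\mathcal{O}=\bigoplus_{n\ge0}\mathcal{O}(n)$. The algebra is graded: $\odot$ sends $\mathcal{O}(p)\otimes\mathcal{O}(q)\to\mathcal{O}(p+q)$. The coalgebra is graded as well, since $\Delta$ sends $\mathcal{O}(k)\to\bigoplus_j\mathcal{O}(j)\otimes\mathcal{O}(k-j)$. Connectedness gives $\mathcal{O}(0)\cong\mathbb{K}$. The standard argument (Milnor--Moore / Takeuchi) then applies: a connected graded bialgebra is a Hopf algebra. Concretely, I would define the antipode $S$ recursively by $S(1_0)=1_0$ and, for $y\in\mathcal{O}(k)$ with $k\ge1$,
$$S(y)=-\sum_{j=1}^{k}\widetilde{s}^{k-j}(y)\odot S(\widetilde{s}^{j}_1 y),$$
using that the $j=0$ term of $\Delta(y)$ is $\widetilde{s}^{k}(y)\otimes y$ with $\widetilde{s}^{k}(y)\in\mathcal{O}(0)$. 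Since $\widetilde{s}^{j}_1y$ has arity $k-j<k$, the recursion is well defined, and it gives $\odot\circ(Id\otimes S)\circ\Delta=\eta\epsilon$. A symmetric recursion produces a left inverse, and the usual convolution-monoid argument shows the two coincide. Finally, $S$ commutes with $\delta$: I would prove $S\delta=\delta S$ by induction on arity, applying $\delta$ to the defining recursion and using that $\delta$ is simultaneously a derivation and a coderivation.

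I expect the main obstacle to be the term of $\Delta(y)$ containing $y$ itself. The recursion needs this term to be of the form $1_0\otimes y$ up to the scalar $\epsilon(\widetilde{s}^k y)$. So I must check that $\widetilde{s}^{k}(y)=s_1^0\cdots s_k^{k-1}y$ equals $\epsilon(y)\,1_0$, and symmetrically that $\widetilde{s}^k_1(y)=\epsilon(y)1_0$. This is what makes $\epsilon$ a counit, and it rests on connectedness. For a primitive-like term this scalar could be $0$ rather than $1$. In that case I would correct the argument by working with the reduced coproduct $\bar\Delta(y)=\Delta(y)-1_0\otimes y-y\otimes 1_0$ on the augmentation ideal $\bigoplus_{n\ge1}\mathcal{O}(n)$, after first verifying counitality explicitly on generators via the relations $\mu\circ_i1_0=1_{\mathcal O}$. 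I would check this carefully before running the recursion.
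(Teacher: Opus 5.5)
Your outline is the paper's own: take the bialgebra axioms from the two propositions of Subsection 4.1, then build an antipode by recursion on arity using connectedness. Your recursion $S(y)=-\sum_{j\ge 1}\widetilde{s}^{k-j}(y)\odot S(\widetilde{s}^{j}_1 y)$ is in fact the well-founded version. The paper's formula sums over $j=0,\dots,n_{\hat x}$, so its $j=n_{\hat x}$ term contains $\mathcal{A}(\hat x)$ itself, and it sets $\mathcal{A}(1_0)=-1_0$.

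The genuine gap is the step you import: $\Delta\circ\odot=(\odot\otimes\odot)\circ(Id\otimes T\otimes Id)\circ(\Delta\otimes\Delta)$ is false, and with it the theorem. Take $\mathcal{A}_{ss}$ with its permutation basis, the paper's own example. From $s^0_1(1_{\mathcal O})=1_0$ and $\mu\circ_1 1_0=\mu\circ_2 1_0=1_{\mathcal O}$ you get
\[
\Delta(1_{\mathcal O})=1_0\otimes 1_{\mathcal O}+1_{\mathcal O}\otimes 1_0,\qquad \Delta(\mu)=1_0\otimes\mu+1_{\mathcal O}\otimes 1_{\mathcal O}+\mu\otimes 1_0 .
\]
The unit axioms give $1_{\mathcal O}\odot 1_{\mathcal O}=\mu$ and $1_0\odot 1_{\mathcal O}=1_{\mathcal O}\odot 1_0=1_{\mathcal O}$. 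Hence the product of $\Delta(1_{\mathcal O})$ with itself in $\mathcal{O}\otimes\mathcal{O}$ is $1_0\otimes\mu+c\,1_{\mathcal O}\otimes 1_{\mathcal O}+\mu\otimes 1_0$. Here $c=2$ if $T$ carries no sign, and $c=0$ with Koszul signs for the arity grading. In no characteristic is $c=1$, so $\Delta$ is not multiplicative.

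The paper's proof of that proposition breaks at the re-indexing $\sum_{j=0}^{k}=\sum_{i=0}^{n}\sum_{t=0}^{u}$. The left side has only $n+u+1$ terms, and the paper's own case analysis produces only the pairs with $t=0$ or $i=n$. For a basis closed under the codegeneracies, such as permutations, what the computation actually yields is the unital infinitesimal (concatenation/deconcatenation) relation
\[
\Delta(x\odot y)=\Delta(x)\cdot(1_0\otimes y)+(x\otimes 1_0)\cdot\Delta(y)-x\otimes y ,
\]
not Hopf compatibility. This is also why Malvenuto and Reutenauer pair $\Delta_{MR}$ with the shifted shuffle product rather than with $\star$.

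The same example defeats the coderivation property you also rely on. We have $\delta(1_0)=\mu\circ_2 1_0-\mu\circ_1 1_0=0$ and $\delta(1_{\mathcal O})=\mu\circ_2 1_{\mathcal O}-1_{\mathcal O}\circ_1\mu+\mu\circ_1 1_{\mathcal O}=\mu$. So $(\delta\otimes id+id\otimes\delta)\Delta(1_{\mathcal O})=1_0\otimes\mu+\mu\otimes 1_0\neq\Delta(\delta(1_{\mathcal O}))$.

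Thus $((\mathcal{O},\delta),\odot,\eta,\Delta,\epsilon)$ is not even a dg bialgebra, and no choice of antipode can rescue the statement. Your counitality worry is legitimate but is not where the argument dies. For permutations, $\widetilde{s}^{k}\hat y=\widetilde{s}^{k}_1\hat y=1_0$ and counitality holds. For a basis such as $\{12-21,\,12\}$ of $\mathcal{A}_{ss}(2)$, however, $\widetilde{s}^{2}(12-21)=0$ and it fails, so the construction is basis-dependent.
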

\begin{proof}
It is a routine exercise to establish that the counit $\epsilon$ is
a homomorphism of algebras. Now, since our operad is connected
and multiplicative, we define the antipodal homomorphism,
$\mathcal{O}\stackrel{\mathcal{A}}\longrightarrow\mathcal{O},$
recursively
on the number of inputs of the generators as follows:\\
Let $\hat{x}$ be a generic element and denote by $n_{\hat{x}}$ its
number of inputs,
\begin{center}
$$ \mathcal{A}(\hat{x}) =
\left\{\begin{array}{ll} -\hat{x} \quad \quad \mbox{if}\quad \quad   n_{\hat{x}}\in \{0; 1\},\\
       -\hat{x} - \sum\limits_{j=0}^{n_{\hat{x}}} (\mu\circ_{1}\mathcal{A}(\widetilde{s}^{n-j}(\hat{x})))\circ_{2}\widetilde{s}_{1}^{j}(\hat{x}),  \quad
       \mbox{otherwise}.
\end{array}
 \right.$$
\end{center}
 We claim that with the homomorphism $\mathcal{A},$ all the antipodal identities are
 satisfied. In other words, $\mathcal{A},$ is an antipodal
 homomorphism.
We complete the proof by using Proposition 3.1, Proposition 3.2 and Proposition 2.17.
\end{proof}

Let us recall below the definition of the convolution algebra.
\begin{De} Let $(\mathcal{H}, \mu_{\mathcal{H}}, \eta_{\mathcal{H}}, \Delta_{\mathcal{H}}, \epsilon_{\mathcal{H}}, \mathcal{A}_{\mathcal{H}})$
 be a Hopf algebra with unit $\eta_{\mathcal{H}},$ counit $\epsilon_{\mathcal{H}}$ and antipodal map $\mathcal{A}_{\mathcal{H}}.$
The convolution product is the linear map $"\ast"$ defined by:
$$\begin{array}{ccc}
        \mbox{End}(\mathcal{H})\otimes \mbox{End}(\mathcal{H})& \stackrel{\ast}\longrightarrow &
        \mbox{End}(\mathcal{H}) \\ u\otimes v &
        \longmapsto & u\ast v=
        \mu_{\mathcal{H}} \circ (u\otimes v )\circ\Delta_{\mathcal{H}}.
    \end{array}$$
\end{De}

\indent Let $\mathcal{O}$ be a free connected multiplicative operad
endowed with its differential graded Hopf algebra structure.
$$\text{Set}\quad \mbox{End}((\mathcal{O},
\delta))=\{\mbox{End}((\mathcal{O}(n), \delta))=
Hom_{\mathbb{K}}((\mathcal{O}(n), \delta), (\mathcal{O}(n),
\delta)), n\geq 0\},$$ the ring of cochain homomorphisms defined on
$\mathcal{O}.$ We have the following result:
\begin{Pro}\label{4.5}
 The triple $(\mbox{End}((\mathcal{O}, \delta)),\ast, \eta \circ \epsilon)$ is an
associative unitary algebra, called convolution algebra, with unit
$\eta \circ \epsilon.$
\end{Pro}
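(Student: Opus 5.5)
The plan is to verify the convolution algebra axioms directly from the coalgebra structure of Proposition 4.2 (coassociativity of $\Delta$ and the counit property of $\epsilon$) together with the algebra structure $(\mathcal{O},\odot,\eta)$ recalled before Proposition 4.1 (associativity of $\odot$ and unitality of $\eta$). I will not use the antipode or the compatibility of $\Delta$ with $\odot$. As a preliminary step, I check that $\ast$ is well defined on $\mbox{End}((\mathcal{O},\delta))$. If $u,v$ commute with $\delta$, then $u\otimes v$ commutes with $\delta\otimes id+id\otimes\delta$. Moreover, $\Delta$ intertwines $\delta$ with $\delta\otimes id+id\otimes\delta$ by part (iii) of the proof of Proposition 4.2. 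Finally, $\odot$ intertwines $\delta\otimes id+id\otimes\delta$ with $\delta$, since $\delta$ is a derivation for $\odot$ by Proposition 3.3(c) and Remark 3.2(b)(iii). Hence $u\ast v=\odot\circ(u\otimes v)\circ\Delta$ again commutes with $\delta$. Bilinearity of $\ast$ is immediate.

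For associativity, I will compute, for $u,v,w\in \mbox{End}((\mathcal{O},\delta))$,
$$(u\ast v)\ast w=\odot\circ\bigl((\odot\circ(u\otimes v)\circ\Delta)\otimes w\bigr)\circ\Delta=\odot\circ(\odot\otimes id)\circ(u\otimes v\otimes w)\circ(\Delta\otimes id)\circ\Delta .$$
In the same way, $u\ast(v\ast w)=\odot\circ(id\otimes\odot)\circ(u\otimes v\otimes w)\circ(id\otimes\Delta)\circ\Delta$. Associativity of $\odot$ (coming from $\mu\circ_1\mu=\mu\circ_2\mu$) and coassociativity of $\Delta$ (Proposition 4.2(ii)) then give the equality. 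On elements, with the Sweedler-type expansion $\Delta(\hat{y})=\sum_{j}\widetilde{s}^{k-j}\hat{y}\otimes\widetilde{s}_1^{j}\hat{y}$, this is the reindexing $P_{i,j}\leftrightarrow Q_{j,i}$ already used in that proof.

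For the unit, I will show $(\eta\circ\epsilon)\ast u=u=u\ast(\eta\circ\epsilon)$. Writing $(\eta\circ\epsilon)\ast u=\odot\circ(\eta\otimes id)\circ(id_{\mathbb{K}}\otimes u)\circ(\epsilon\otimes id)\circ\Delta$, the counit identity $(\epsilon\otimes id)\circ\Delta=id$ reduces this to $\odot\circ(\eta\otimes id)\circ u$. Since $\eta(1_{\mathbb{K}})=1_0$ and $1_0\odot y=(\mu\circ_2 y)\circ_1 1_0=y$ by the relation $\mu\circ_1 1_0=1_{\mathcal{O}}$, this equals $u$. The right unit identity is symmetric, using $\mu\circ_2 1_0=1_{\mathcal{O}}$.

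The main obstacle is the counit identity. Proposition 4.2 asserts that $\epsilon$ is a counit but checks only coassociativity and coderivation. I will therefore verify it directly: $\epsilon$ kills $\mathcal{O}(j)$ for $j>0$ and is the inverse of $\eta$ on $\mathcal{O}(0)\cong\mathbb{K}$. So $(\epsilon\otimes id)\Delta(\hat{y})$ only sees the $j=k$ term $\widetilde{s}^{0}\hat{y}\otimes\widetilde{s}_1^{k}\hat{y}$, whose first factor is $\hat{y}$ itself. That looks wrong, since the term that should survive is the one of arity $(0,k)$, namely $j=0$: $\widetilde{s}^{k}\hat{y}\otimes\widetilde{s}_1^{0}\hat{y}=\widetilde{s}^{k}\hat{y}\otimes\hat{y}$. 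Its first factor $\widetilde{s}^{k}\hat{y}=\hat{y}\circ_1 1_0\circ\cdots$ lies in $\mathcal{O}(0)\cong\mathbb{K}$ by connectedness, so it is a scalar multiple of $1_0$. The delicate point is that this scalar is $1$, i.e.\ $\epsilon(\hat{y}(1_0,\ldots,1_0))=1$. I expect to need the normalization inherent in the paper's conventions (the generators are normalized by $\epsilon$), and I would state this as the assumption being used.
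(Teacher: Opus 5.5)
\textbf{The gap is in your preliminary step.} That step is also the only one the paper's own proof carries out: showing that $\mbox{End}((\mathcal{O},\delta))$ is closed under $\ast$. Your argument and the paper's both rest on the coderivation identity $\Delta\circ\delta=(\delta\otimes id+id\otimes\delta)\circ\Delta$ from Proposition 4.2(iii), and that identity is false.

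Already in $\mathcal{A}_{ss}$ one has $\delta(1)=D_0^2(1)-D_1^2(1)+D_2^2(1)=(12)-(12)+(12)=(12)$. Also $\Delta(1)=1_0\otimes(1)+(1)\otimes 1_0$ and $\delta(1_0)=\mu\circ_2 1_0-\mu\circ_1 1_0=0$. Hence $(\delta\otimes id+id\otimes\delta)\Delta(1)=1_0\otimes(12)+(12)\otimes 1_0$. On the other hand, $\Delta(\delta(1))=\Delta(12)=1_0\otimes(12)+(1)\otimes(1)+(12)\otimes 1_0$, so the term $(1)\otimes(1)$ is missing.

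\textbf{The statement itself fails.} Take $u=v=I$, which trivially commutes with $\delta$. Since $1_0$ is a two-sided unit for $\odot$, we get $(I\ast I)(1)=1_0\odot(1)+(1)\odot 1_0=2\,(1)$. Using the formula for $\Delta(12)$ above and $(1)\odot(1)=(12)$ (concatenation), we get $(I\ast I)(12)=3\,(12)$. Therefore $\delta\bigl((I\ast I)(1)\bigr)=2\,(12)\neq 3\,(12)=(I\ast I)\bigl(\delta(1)\bigr)$. The same happens in $Com$: there $I\ast I$ is multiplication by $n+1$ in arity $n$, while $\delta e_1=e_2\neq 0$. So the cochain maps do not form a subalgebra.

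\textbf{What your argument does prove.} Your associativity and unit computations show that $\ast$ makes the space of all arity-preserving linear endomorphisms of $\mathcal{O}$ an associative unital algebra with unit $\eta\circ\epsilon$, provided $\Delta$ is coassociative and counital.

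Your suspicion about the counit is correct. The arity-$k$ component of $(\epsilon\otimes id)\Delta(\hat y)$ is $\epsilon(\hat y(1_0,\ldots,1_0))\,\hat y$, so you need a normalized basis. For example, it fails for the basis $\{(12),(12)+(21)\}$ of $\mathcal{A}_{ss}(2)$.

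The coassociativity you import has a similar hidden hypothesis. The proof of Proposition 4.2(ii) applies the nonlinear defining formula of $\Delta$ to $\widetilde{s}^{k-i}\hat y$ and $\widetilde{s}_1^{i}\hat y$. This tacitly assumes these are again basis elements, i.e.\ that the basis is stable under the $s_i$.

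Both hypotheses hold for the permutation basis of $\mathcal{A}_{ss}$. Even there, the restriction to cochain maps fails, as shown above.
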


\begin{proof}
It suffices to show
that the convolution product $"\ast"$ is an inner operator,  since showing the remaining properties of the algebra structure is a
routine exercise.  We show that
for all $u, v \in\mbox{End}((\mathcal{O}, \delta)),$
$$\delta\circ(u\ast v)= (u\ast v)\circ \delta.$$ Indeed, \begin{align*}
    \delta\circ(u\ast v)&=\delta\circ(\mu \circ (u\otimes v )\circ\Delta)\\
    &=\mu \circ (\delta\otimes id + id \otimes \delta) \circ (u\otimes v )\circ\Delta \\
    &=\mu \circ ((\delta\circ u)\otimes v + u \otimes (\delta\circ v))\circ\Delta \\
    &=\mu \circ ((u\circ \delta)\otimes v + u \otimes (v\circ \delta))\circ\Delta \\
    &= (\mu \circ (u\otimes v)\circ \Delta)\circ \delta\\
    &= (u\ast v)\circ \delta.
\end{align*}
This completes the proof.%Hence, $(\mbox{End}((\mathcal{O}, \delta)),\ast, \eta \circ \epsilon)$ is a convolution algebra.
\end{proof}

Note that the convolution product of Proposition \ref{4.5} leads to some well
known operations, namely  Adams operations (\cite{P}) or
characteristic operations (\cite{M-M}), which commute in this case with the coboundary operator. \\
\begin{De}
Let $\mathcal{O}$ be a free connected multiplicative operad endowed with
its differential graded Hopf algebra structure, and
$(\mbox{End}((\mathcal{O}, \delta)),\ast, \eta \circ \epsilon)$ its
associated convolution algebra. The Adams cochain operations are the operations defined recursively
as follows:

$$\left\{\begin{array}{lll} \varphi^{0}& = & \eta\circ \epsilon,\\
& &\\
 \varphi^{k}& = & \underbrace{I \ast I \ast \cdots \ast
I}\limits_{\mbox{k fois}},  \quad
       \mbox{if}  \quad k\geq 1,\quad \mbox{I the linear identity
       in}~\mbox{End}(\mathcal{O}).
\end{array}
 \right.$$
\end{De}
\begin{Co}
Let $\mathcal{O}$ be a free connected multiplicative operad and
$(\mbox{End}((\mathcal{O}, \delta)),\ast, \eta \circ \epsilon),$ its
associated convolution algebra. The following assertions hold:
\begin{enumerate}
\item[$(i)$]  The Adams cochain operations satisfy the following
identity :\\ $$\mbox{for}\quad r, s \in \mathbb{N}, \varphi^{r} \ast
\varphi^{s}= \varphi^{r+s}.$$

\item[$(ii)$]  The set of Adams cochain operations is  a  monoid with respect to the
convolution product.
\item[$(iii)$] The convolution product induces a Lie algebra structure
on   $\mbox{End}((\mathcal{O}, \delta))$ with bracket:
$$\begin{array}{ccc}
        \mbox{End}((\mathcal{O}, \delta))\otimes \mbox{End}((\mathcal{O}, \delta))& \stackrel{[-, -]}\longrightarrow &
        \mbox{End}((\mathcal{O}, \delta))\\ f\otimes g &
        \longmapsto & [f, g]=
        f\ast g - g\ast f.
    \end{array}$$
\end{enumerate}
\end{Co}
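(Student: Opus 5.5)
The three assertions all follow from the associativity and unitality of the convolution product established in Proposition \ref{4.5}. The plan is to prove (i) by induction, deduce (ii) from it, and get (iii) as the commutator construction on an associative algebra.

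For $(i)$, I would induct on $r$ with $s$ fixed.

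\begin{itemize}
\item The base case $r=0$ is the statement that $\varphi^{0}=\eta\circ\epsilon$ is a two-sided unit for $\ast$, which is part of Proposition \ref{4.5}. The same fact handles $s=0$.
\item For $r,s\geq 1$ the definition gives $\varphi^{r}$ and $\varphi^{s}$ as the $\ast$-products of $r$ and $s$ copies of $I$. By associativity of $\ast$, the bracketing is irrelevant, so $\varphi^{r}\ast\varphi^{s}$ is the product of $r+s$ copies of $I$, namely $\varphi^{r+s}$.
\item Formally, one shows $\varphi^{r+1}=\varphi^{r}\ast I$ and then inducts, using $\varphi^{r+1}\ast\varphi^{s}=\varphi^{r}\ast(I\ast\varphi^{s})=\varphi^{r}\ast\varphi^{s+1}$.
\item One must also check that each $\varphi^{k}$ lies in $\mbox{End}((\mathcal{O},\delta))$, i.e.\ commutes with $\delta$. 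This holds because $I$ trivially does, and Proposition \ref{4.5} shows that $\ast$ preserves cochain maps.
\end{itemize}

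For $(ii)$, the set $\{\varphi^{k}:k\in\mathbb{N}\}$ is closed under $\ast$ by $(i)$ and contains the unit $\varphi^{0}$. Associativity is inherited from the ambient algebra. Indeed $k\mapsto\varphi^{k}$ is a monoid homomorphism from $(\mathbb{N},+)$ onto this set.

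For $(iii)$, any associative algebra becomes a Lie algebra under the commutator, so the task is to verify the Lie axioms.

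\begin{itemize}
\item Bilinearity holds because $\ast$ is bilinear, being composed of linear maps $\mu\circ(u\otimes v)\circ\Delta$.
\item Antisymmetry $[f,f]=0$ is immediate.
\item For the Jacobi identity, expand $[f,[g,h]]+[g,[h,f]]+[h,[f,g]]$ into twelve triple products. Associativity of $\ast$ lets each $f\ast g\ast h$-type term cancel against its negative.
\item Closure, i.e.\ that $[f,g]$ commutes with $\delta$, again follows from Proposition \ref{4.5} together with linearity of $\delta$.
\end{itemize}

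The only substantive input is Proposition \ref{4.5}. Specifically, it supplies associativity of $\ast$ and the unit property of $\eta\circ\epsilon$, which in turn rest on coassociativity of $\Delta$ and the counit property from the coalgebra proposition in Section 4.1. The main point of care is the unit property, which requires $\epsilon$ to be a genuine counit for $\Delta$. I would verify it directly from the summands $j=0$ and $j=k$ of $\Delta$, where $\widetilde{s}^{k}\hat{y}$ or $\widetilde{s}_{1}^{k}\hat{y}$ lands in $\mathcal{O}(0)\cong\mathbb{K}$ and $\mu\circ_{i}1_{0}=1_{\mathcal{O}}$ recovers $\hat{y}$. Everything else is formal.
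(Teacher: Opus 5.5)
\textbf{The formal part is fine; the gap is membership in $\mbox{End}((\mathcal{O},\delta))$.} The paper states this corollary without proof, as a consequence of Proposition~\ref{4.5}. Your induction for (i), the monoid argument for (ii) and the commutator argument for (iii) are the intended deductions. They are valid in the full convolution algebra of all arity-preserving linear endomorphisms of $\mathcal{O}$.

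The gap is the one non-formal step you hand to Proposition~\ref{4.5}: that $\ast$ preserves cochain maps, so every $\varphi^{k}$ and every $[f,g]$ stays in $\mbox{End}((\mathcal{O},\delta))$. That claim is false.

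From the definitions:
\begin{itemize}
\item $\delta(1_{\mathcal{O}})=\mu\circ_{2}1_{\mathcal{O}}-1_{\mathcal{O}}\circ_{1}\mu+\mu\circ_{1}1_{\mathcal{O}}=\mu$;
\item $\Delta(1_{\mathcal{O}})=1_{0}\otimes 1_{\mathcal{O}}+1_{\mathcal{O}}\otimes 1_{0}$;
\item $\Delta(\mu)=1_{0}\otimes\mu+1_{\mathcal{O}}\otimes 1_{\mathcal{O}}+\mu\otimes 1_{0}$.
\end{itemize}
Hence $\varphi^{2}(1_{\mathcal{O}})=2\cdot 1_{\mathcal{O}}$ and $\varphi^{2}(\mu)=3\mu$. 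Therefore $\varphi^{2}(\delta(1_{\mathcal{O}}))=3\mu\neq 2\mu=\delta(\varphi^{2}(1_{\mathcal{O}}))$. This holds for any choice of generators containing $1_{\mathcal{O}}$ and $\mu$, in particular in $\mathcal{A}_{ss}$ with $\mu=12$.

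The underlying error is that $\delta$ is not a coderivation of $\Delta$. That is part (iii) of the coalgebra proposition in Section 4.1, which the proof of Proposition~\ref{4.5} uses. Indeed $\Delta(\delta(1_{\mathcal{O}}))-(\delta\otimes id+id\otimes\delta)\Delta(1_{\mathcal{O}})=1_{\mathcal{O}}\otimes 1_{\mathcal{O}}$.

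The bracket fails to preserve cochain maps as well. In $\mathcal{A}_{ss}$, let $v$ send $21\mapsto 12$ and vanish on every other permutation. Then $v\circ\delta=0=\delta\circ v$, because $\delta(1_{\mathcal{O}})=12$ and $\delta(12)=0$, so $v$ is a cochain map. Yet $[I,v](321)=0$, while
\[
[I,v](\delta(321))=[I,v](1432-3421+4231-4312+3214)=2134-1243 .
\]
So (iii) is false as stated. In (i)--(ii), the $\varphi^{k}$ with $k\geq 2$ are not cochain operations, and your argument proves those identities only in the larger algebra.

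\textbf{Your counit check is incomplete.} The $j=0$ summand gives $\epsilon(\widetilde{s}^{k}\hat{y})\,\hat{y}$. This equals $\hat{y}$ only if $\widetilde{s}^{k}\hat{y}=1_{0}$ exactly. That is a property of the chosen generators, not a consequence of $\mu\circ_{i}1_{0}=1_{\mathcal{O}}$.

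Because $\Delta$ is only linear by extension from the generators, the defining formula is quadratic in $\hat{y}$. So replacing a generator $\hat{y}$ with $\widetilde{s}^{k}\hat{y}=1_{0}$ by $2\hat{y}$ gives $(\epsilon\otimes id)\Delta(2\hat{y})=4\hat{y}$.

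For the permutation basis of $\mathcal{A}_{ss}$, everything the formal argument needs does hold: coassociativity, the counit, and associativity and unit of $\odot$. There your proof correctly gives $\varphi^{r}\ast\varphi^{s}=\varphi^{r+s}$, the monoid $\{\varphi^{k}\}$, and the commutator Lie algebra on all arity-preserving linear endomorphisms. It does not give any of these on $\mbox{End}((\mathcal{O},\delta))$.
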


\section{  Application on $\mathcal{A}_{ss}=\bigoplus\limits_{n\ge 0}\mathbb{K}[\sum_n]$}
In this section, we apply our construction to the specific operad
$\mathcal{A}_{ss}.$\\ Recall that $\mathcal{A}_{ss}$
is a connected multiplicative operad with multiplication
$\mu=(12)\in \sum_2$ and  base point $1_\mathbb{K}.$ Furthermore, this symmetric operad is
freely generated by $ \mathbb{K}[\sum_2],$  and  admits a presentation by generators and relations. More
precisely $\mathcal{A}_{ss}= \langle\mathbb{K}[\sum_2],
\mathcal{R}\rangle,$ where the set of relations is given by
$$\mathcal{R}=\{\mu\circ_{1} \mu - \mu\circ_{2} \mu,
\mu\circ_{1}1_{\mathbb{K}} - 1_{\mathcal{A}_{ss}}, \mu\circ_{2}
1_{\mathbb{K}} - 1_{\mathcal{A}_{ss}}\}.$$
\subsection{Cosimplicial structure on $\mathcal{A}_{ss}=\bigoplus\limits_{n\ge 0}\mathbb{K}[S_n]$}

The cosimplicial structure on $\mathcal{A}_{ss}$ is defined here
through the partial composition. Recall below some
tools needed for this definition.
\begin{De}{(see \cite{M-R}, \cite{C-P})}
    The standardization map, is the map which, assigns to any word of length p, $X=a_1\cdots a_p$ in the letters $1,\cdots,p$,
    the unique word $st(X)=b_1\cdots b_p$ in the letters $1,\cdots,p$,
   without repetition of the letters, such that the relative order of the letters
   is preserved: $b_i<b_j$ if $a_i<a_j.$ Furthermore, if $a_i= a_j$
    and $i<j$, then $b_i<b_j$.
\end{De}
\begin{Ex} $st(3645)=(1423),$  $st(2122)=(2134).$
\end{Ex}
\indent Recall that $\mathcal{\sum}= \{\sum_{n}\}_{n\geq 1}$ is a sequence
of sets whose $n^{th}$ term is the set of permutations of order $n$.
For every $1\leq i\leq n,$ we define a partial composition on $\mathcal{\sum}=
\{\sum_{n}\}_{n\geq 1}$ as follows: $$\begin{array}{ccc}
\sum_{n} \times \sum_{l} &
    \stackrel{\circ_i}\longrightarrow &
    \sum_{n+l-1}\\
    (\tau, \sigma) & \mapsto & \tau \circ_i \sigma
\end{array}.$$ We recall below the method proposed in (see \cite{B-T}) to compute $\tau \circ_i \sigma$:
\begin{enumerate}
\item[(i)] We start with the computation of $\tau(i)$ by considering the block of length $l$ $$A=(\tau(i),\quad \tau(i)+1, \quad ...\quad, \tau(i)+l-1)$$
which is permuted by $\sigma$. The result obtained is denoted
$\sigma(A)$.
\item[(ii)] Next we consider a subdivision $B,$ of  $[n+l-1]$ into $n$ blocks in which the block $\sigma(A)$ is located at the position $\tau(i)$.
\item[(iii)] Finally $\tau\circ_i\sigma=\tau^{B},$ the permutation of $B$ by
$\tau$; that is, we permute the $n$ blocks of $B$ as components of
the permutation by $\tau.$
\end{enumerate}
\begin{Ex} Let $\tau=(4312)\in \sum_{4}$ and $\sigma=(231)\in
\sum_{3}$, then we are going to compute
consecutively $\tau\circ_1\sigma\in \sum_{6}$ and $\tau\circ_2\sigma\in\sum_6$:\\
$\bullet$ Since $\tau(1)=4$ and $A=(456),$ then $\sigma(A)=(564)$.
So, the subdivision of $[6]$ into 4 blocks is
$B=(1)(2)(3)\sigma(A)=(1)(2)(3)(564).$
 It follows that  $\tau(B)=(564312)=\tau\circ_1\sigma$.\\
$\bullet$ Furthermore, $\tau(2)=3$  and $A=(345)$ implies that
$\sigma(A)=(453)$. Now, considering the subdivision of $[6]$ into 4
blocks $B=(1)(2)\sigma(A)(6)=(1)(2)(453)(6),$ we obtain
$\tau(B)=(645312)=\tau\circ_2\sigma$.
\end{Ex}
 Hereafter, we define the maps that endow the connected  multiplicative free generated operad $\mathcal{A}_{ss}$ with a
 cosimplicial structure.
\begin{enumerate}
    \item Codegeneracies on $\mathcal{A}_{ss}$:\\
     for $1\le i\le n$, we have
   $$\begin{array}{ccc}
    \mathbb{K}[\sum_n]&\stackrel{s^{i,n-1}}\longrightarrow & \mathbb{K}[\sum_{n-1}]\\
    \sigma & \longmapsto & s^{i, n-1}(\sigma)=\sigma\circ_i1_0
    \end{array}$$
    such that $$\sigma\circ_i1_0=st(\sigma (1)\cdots\sigma(i-1)\sigma(i+1)\cdots\sigma(n)).$$
    \item Cofaces on $\mathcal{A}_{ss}$:\\ for $0\le i\le n+1$, we have
    $$\begin{array}{ccc}
        \mathbb{K}[\sum_n] & \stackrel{d^{i,n+1}}\longrightarrow & \mathbb{K}[\sum_{n+1}]\\
    \sigma & \longmapsto & d^{i, n+1}\sigma
   \end{array}$$
    such that $$d^{i, n+1}(\sigma)=\left \{ \begin{array}{ll}\sigma\circ_i\mu,~ if~ 1\le i\le n\\ \mu\circ_2\sigma, ~if~ i=0\\ \mu\circ_1\sigma, ~if~ i=n+1.
\end{array}
 \right.$$
\end{enumerate}

\begin{Pro}(\cite{B-T})\\ The triple $(\mathcal{A}_{ss}, \odot, D=\delta +\partial)$
with
 $$\mathcal{A}_{ss}=\{\mathcal{A}_{ss}(n)=\mathbb{K}[\sum_{n} ]\}_{n\ge 0}, ~\sigma\odot \tau= \sigma \star \tau, ~~\mbox{the concatenation of}~~ \sigma ~~ \mbox{and} ~~\tau;$$
$$\delta_n=\sum\limits_{i=0}^{n+1}(-1)^id^{i,n+1}:\mathbb{K}[ \sum_{n} ]\longrightarrow\mathbb{K}[ \sum_{n+1} ], ~~n\ge 0;$$
$$\partial_n=\sum\limits_{i=1}^{n}(-1)^is^{i,n-1}:\mathbb{K}[ \sum_{n} ]\longrightarrow\mathbb{K}[ \sum_{n-1} ], ~~n\ge 1$$
  is a bicomplex algebra.
\end{Pro}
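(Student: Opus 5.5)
The plan is to obtain the statement as a specialization of Remark 3.2(b)(iii) (the result of \cite{B-T}): $(\mathcal{O},\odot,D=\delta+\partial)$ is a bicomplex algebra for any free pointed multiplicative operad whose multiplication and base point satisfy $\mu\circ_11_0=1_{\mathcal{O}}=\mu\circ_21_0$. So the first step is to check that $\mathcal{A}_{ss}$ fits these hypotheses. It is connected, with base point $1_0=1_{\mathbb{K}}\in\mathcal{A}_{ss}(0)=\mathbb{K}$, and multiplicative with $\mu=(12)\in\Sigma_2$. The relation $\mu\circ_1\mu=\mu\circ_2\mu$ is exactly the first relation in $\mathcal{R}$, and $\mu\circ_i1_0=1_{\mathcal{A}_{ss}}$ are the other two. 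Since $\mathcal{A}_{ss}=\langle\mathbb{K}[\Sigma_2],\mathcal{R}\rangle$ is presented by generators and relations, Remark 3.2(a) lets us apply Proposition 3.1 and Remark 3.2(b) to classes of representatives. We regard everything as concentrated in degree zero with zero internal differential.

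The second step is to identify the abstract operations of Remark 3.2 with the combinatorial ones listed above. For the codegeneracies, I would check that $\sigma\circ_i 1_0$, computed with the block-substitution recipe of \cite{B-T}, deletes the letter $\sigma(i)$ and then standardizes. This holds because substituting the empty block removes one position and shifts the larger values down by one, which is precisely $st$. For the cofaces, $\sigma\circ_i\mu$, $\mu\circ_2\sigma$ and $\mu\circ_1\sigma$ are the $D_i^{n+1}$ of Proposition 3.1(b) by definition, so $\delta_n$ and $\partial_n$ agree with the operators of Remark 3.2(b)(i),(ii). For the product, $\sigma\odot\tau=(\mu\circ_2\tau)\circ_1\sigma$. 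With $\mu=(12)$ the substitution rule places $\sigma$ in the block $\{1,\dots,p\}$ and $\tau$ shifted by $p$ in the block $\{p+1,\dots,p+q\}$, so $\sigma\odot\tau$ is the shifted concatenation $\sigma\star\tau$. I would verify this on a small example and then in general from the definition of $sub_i$.

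Once these identifications are in place, $\delta^2=0$, $\partial^2=0$, the mixed compatibility $\delta\partial+\partial\delta=0$ (so that $D^2=0$), and the Leibniz rule for $D$ with respect to $\odot$ all transfer directly from Remark 3.2(b)(iii).

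The main obstacle is the Leibniz compatibility, if one wants to see it concretely rather than only by citation. For $\delta$, I would split $\delta(\sigma\star\tau)$ into the cofaces acting inside $\sigma$ (indices $0,\dots,p$) and inside $\tau$ (indices $p+1,\dots,p+q+1$). The outer faces $d^{0}$ and $d^{p+q+1}$ then pair with the terms $\mu\circ_2-$ and $\mu\circ_1-$ of $\delta\sigma\star\tau$ and $\sigma\star\delta\tau$, up to the cancelling middle term $\sigma\circ_{p}\mu$ versus $\mu\circ_1\sigma$. For $\partial$ the split is cleaner: deleting a letter from $\sigma\star\tau$ deletes it from exactly one factor, which gives $\partial(\sigma\star\tau)=\partial\sigma\star\tau+(-1)^p\sigma\star\partial\tau$. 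The sign $(-1)^p$ uses the arity grading, and I would need care that this sign convention matches the one in \cite{B-T}.
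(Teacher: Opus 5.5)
Your proposal is correct and takes the same route as the paper. The paper gives no separate argument: it attributes the statement to \cite{B-T}, i.e.\ it is Remark 3.2(b)(iii) specialized to $\mathcal{A}_{ss}$ with $\mu=(12)$ and $1_0=1_{\mathbb{K}}$, which is exactly how you set it up. Your identifications of $\odot$ with shifted concatenation and of $\sigma\circ_i 1_0$ with delete-and-standardize are also correct.

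There is one slip in your optional direct check of the Leibniz rule for $\delta$. The two leftover terms that cancel are $(-1)^{p+1}(\mu\circ_1\sigma)\star\tau$ and $(-1)^{p}\sigma\star(\mu\circ_2\tau)$, both equal to $\sigma\star 1_{\mathcal{A}_{ss}}\star\tau$. The term $(\sigma\circ_p\mu)\star\tau$ does not cancel; it is simply $d^{p}(\sigma\star\tau)$.
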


\subsection{Differential graded Hopf algebra on $\mathcal{A}_{ss}$}

we begin this subsection with the definition of a coproduct on the
specific connected multiplicative free generated operad
$\mathcal{A}_{ss}$ as follows: for a given nonzero integer $n,$
 $$\begin{array}{ccc}
        \Delta:\mathcal{A}_{ss}(n)& \longrightarrow &
        (\mathcal{A}_{ss}\otimes\mathcal{A}_{ss})(n)=
        \bigoplus\limits_{p+q=n}\mathcal{A}_{ss}(p)\otimes\mathcal{A}_{ss}(q)\\ \tau &
        \longmapsto &
       \Delta(\tau)=\sum_{j=0}^{n}{\widetilde{s}^{n-j}\tau\otimes\widetilde{s}^{1,j}\tau}
       \end{array} $$
       where the maps $\widetilde{s}^{n-j}:
       \mathcal{A}_{ss}(n)\longrightarrow\mathcal{A}_{ss}(j)$ and
       $\widetilde{s}^{1,j}: \mathcal{A}_{ss}(n)\longrightarrow
       \mathcal{A}_{ss}(n-j)$ are respective defined as follows:
       $$\widetilde{s}^{n-j} = \left\{\begin{array}{ll}s^{j+1,j}\circ
        s^{j+2,j+1}\circ \cdots \circ s^{n-1,n-2}\circ s^{n,n-1}, \quad if \quad 0 \leq j<n,\\
        Id,  \quad if \quad j= n \end{array} \right.$$ and
       $$\widetilde{s}^{1,j}=
       \left\{\begin{array}{ll}
        s^{1,n-j}\circ
        s^{1,n-j+1}\circ \cdots \circ s^{1,n-2}\circ s^{1,n-1},~ if ~ 0< j\le n,\\
        Id, ~~~~if~~~~~ j= 0. \end{array}
        \right.$$
        In addition, this coproduct leads to the result below which is
         a consequence of Proposition 3.2.
\begin{Pro}
    The triple $((\mathcal{A}_{ss}, \delta)=\bigoplus\limits_{n\ge 0}(\mathcal{A}_{ss}(n), \delta),\Delta,\varepsilon)$
    is a bicomplex coalgebra with counit, $\mathcal{A}_{ss}\stackrel{\varepsilon}\longrightarrow\mathbb{K},$
 the identity homomorphism in degree zero and zero homomorphism in nonzero degree.
\end{Pro}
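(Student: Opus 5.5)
The plan is to derive the statement as a specialization of the general coalgebra result of Section 4, that is, of the Proposition asserting that $((\mathcal{O},\delta),\Delta,\epsilon)$ is a coaugmented differential graded coalgebra. The paper refers to this as Proposition 3.2; I use that label below. First I will check that $\mathcal{A}_{ss}$ satisfies the hypotheses of that proposition. It is connected, since $\mathcal{A}_{ss}(0)=\mathbb{K}[\Sigma_0]\cong\mathbb{K}$ with base point $1_0=1_{\mathbb{K}}$. It is multiplicative with $\mu\in\Sigma_2$, and associativity $\mu\circ_1\mu=\mu\circ_2\mu$ holds because it is one of the relations in $\mathcal{R}$. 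The relations $\mu\circ_11_0=1_{\mathcal{A}_{ss}}=\mu\circ_21_0$ also belong to $\mathcal{R}$; concretely, $\mu\circ_i 1_0$ is the standardization of a one-letter word, hence the identity of $\Sigma_1$. Finally, $\mathcal{A}_{ss}$ is concentrated in degree zero with zero internal differential, so it falls in the non-dg setting of Section 4.

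Next I will identify the maps. The codegeneracies $s^{i,n-1}(\sigma)=\sigma\circ_i 1_0=st(\sigma(1)\cdots\widehat{\sigma(i)}\cdots\sigma(n))$ and the cofaces $d^{i,n+1}$ are exactly the operadic $s_i^{n-1}$ and $D_i^{n+1}$ of the Proposition in Section 3.1, computed with the partial composition of $\Sigma$ recalled above. Consequently $\widetilde{s}^{n-j}$ and $\widetilde{s}^{1,j}$ coincide with $\widetilde{s}^{n-j}$ and $\widetilde{s}_1^{j}$. Concretely, $\widetilde{s}^{n-j}\tau$ is the standardization of the subword of $\tau$ formed by the letters in positions $1,\dots,j$, and $\widetilde{s}^{1,j}\tau$ is the standardization of the subword in positions $j+1,\dots,n$. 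Thus $\Delta$ is the deconcatenation-type coproduct used by Malvenuto--Reutenauer. I will record this description because it makes coassociativity transparent: both $(\Delta\otimes id)\Delta$ and $(id\otimes\Delta)\Delta$ send $\tau$ to $\sum_{0\le i\le j\le n} st(\tau_{[1,i]})\otimes st(\tau_{[i+1,j]})\otimes st(\tau_{[j+1,n]})$. This uses that standardization of a subword of a standardized word equals standardization of the subword, which serves as a sanity check on the abstract argument.

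For the counit, $\varepsilon$ is the retraction of the coaugmentation $\eta:\mathbb{K}\to\mathcal{A}_{ss}(0)$, namely the identity in arity zero and zero otherwise. The counit axiom follows from the extreme terms $j=0$ and $j=n$ of $\Delta$, where one factor lies in $\mathcal{A}_{ss}(0)=\mathbb{K}$ and the other is $\tau$ itself. Compatibility with $\delta$ (that $\delta$ is a coderivation) follows from part (iii) of the Section 4 coalgebra proposition, via the cosimplicial identities $s_iD_j$. Compatibility with $\partial$, needed for the word ``bicomplex'', follows from the bicomplex structure $(\mathcal{A}_{ss},\odot,D=\delta+\partial)$ in the Proposition cited from \cite{B-T}.

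The step I expect to be the main obstacle is the coderivation identity in degree-boundary cases, namely cofaces $D_0$ and $D_{n+1}$ hitting the cut point. There the cases (a)/(b) of the general proof must be matched with sign bookkeeping $(-1)^j$. If that step resists, I would verify it directly on permutations. Inserting $\mu$ at position $i$ of $\tau$ either lands entirely in the left or the right subword, or is split by the cut. The split terms then cancel pairwise, since $D_i$ followed by the cut reproduces $\widetilde{s}$ terms with alternating signs.
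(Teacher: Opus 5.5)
Your coalgebra part is sound, and cleaner than the paper's index matching: once $\widetilde{s}^{n-j}\tau=st(\tau(1)\cdots\tau(j))$ and $\widetilde{s}^{1,j}\tau=st(\tau(j+1)\cdots\tau(n))$, coassociativity and the counit axiom are immediate. The gap is the differential. You take the coderivation identity from part (iii) of the general coalgebra proposition. The paper does exactly the same, since it only declares the statement a consequence of that proposition. But that identity is false on $\mathcal{A}_{ss}$, so no sign bookkeeping can rescue it. Let $1=1_{\mathcal{A}_{ss}}\in\Sigma_1$. Then $\delta(1)=d^{0,2}(1)-d^{1,2}(1)+d^{2,2}(1)=\mu-\mu+\mu=\mu$, so $\Delta\delta(1)=1_0\otimes\mu+1\otimes 1+\mu\otimes 1_0$. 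On the other hand $\delta(1_0)=\mu\circ_2 1_0-\mu\circ_1 1_0=1-1=0$ and $\Delta(1)=1_0\otimes 1+1\otimes 1_0$. Hence $(\delta\otimes id\pm id\otimes\delta)\Delta(1)=\pm 1_0\otimes\mu+\mu\otimes 1_0$, which contains no $1\otimes 1$ under any sign convention.

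Your fallback breaks exactly at the terms you expected to cancel. For $1\le i\le n$, cutting $d^{i,n+1}\tau$ between the two copies of the doubled letter gives $(-1)^i\,st(\tau(1)\cdots\tau(i))\otimes st(\tau(i)\cdots\tau(n))$. For different $i$ these lie in different bidegrees $(i,n-i+1)$, so they cannot cancel one another. With the paper's sign $(-1)^j$, the only unmatched right-hand terms in that bidegree are two. The first is $(-1)^i\,st(\tau(1)\cdots\tau(i-1)M)\otimes st(\tau(i)\cdots\tau(n))$, with $M$ a new maximal letter, coming from $\mu\circ_1$ on the left factor. The second is $(-1)^i\,st(\tau(1)\cdots\tau(i))\otimes st(m\,\tau(i+1)\cdots\tau(n))$, with $m$ a new minimal letter, coming from $\mu\circ_2$ on the right factor. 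The defect in that bidegree is therefore $(-1)^i$ times a combination of basis tensors with coefficients $1,-1,-1$. Its coefficient sum is nonzero, so it never vanishes. This is also where the paper's case (b), $\widetilde{s}_1^{k}D_i^{n+1}=D_{i-k}^{n-k+1}\widetilde{s}_1^{k}$ for $i\ge k$, fails, namely at $i=k$.

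Separately, deriving compatibility with $\partial$ from the bicomplex algebra $(\mathcal{A}_{ss},\odot,D)$ is a non sequitur: that result concerns $\odot$, not $\Delta$. Moreover $\partial$ is not a coderivation either: $\partial(1)=-1_0$ gives $\Delta\partial(1)=-1_0\otimes 1_0$, whereas $(\partial\otimes id\pm id\otimes\partial)\Delta(1)$ equals $0$ or $-2\,(1_0\otimes 1_0)$. What your argument does prove is that $(\mathcal{A}_{ss},\Delta,\varepsilon)$ is a counital coassociative coalgebra, namely the Malvenuto--Reutenauer one. The differential part of the statement cannot be established as stated.
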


\begin{Rem} Consider a permutation $\sigma=(\sigma(1)~\cdots~\sigma(n))\in
\sum_n$, and write for $1\le i\le n,$
$$\sigma_i^-=(\sigma(1)~\cdots~\sigma(i))~~\mbox{and}~~\sigma_{i+1}^+=(\sigma(i+1)~\cdots~\sigma(n).$$
The Malvenuto-Reutenauer coassociative coproduct (see \cite{M-R} for
more details) is defined as follows: for all $\sigma\in \mathbb{K}[
\sum_{n} ]$
\begin{center}
    $\Delta_{MR}(\sigma)=\sum\limits_{i=0}^nst(\sigma_i^-)\otimes st(\sigma_{i+1}^+)$.
\end{center}
The associated counit is:
\begin{center}
    $\varepsilon_{MR}:\mathbb{K}[ \sum_{n} ]\longrightarrow\mathbb{K}$
\end{center}
such that $\varepsilon_{MR}^0=id$ and $\varepsilon_{MR}^k=0$, for all
$k\neq 0$.\\ Moreover,  the coproducts $\Delta$ and $\Delta_{MR}$
coincide (see [\cite{C-P}, proposition 9.7.4] or [\cite{M-R}, section
3, page 977], when  the field $\mathbb{K}$ is replaced by the ring
$\mathbb{Z}$).
\end{Rem}
 From the identification of the previous Remark, the Malvenuto-Reutenauer result
 becomes:
\begin{Pro}
The quintuple $((\mathcal{A}_{ss}, \delta), \odot=\star , \Delta=
\Delta_{M-R}, \eta, \varepsilon)$ is a differential graded Hopf
algebra.
\end{Pro}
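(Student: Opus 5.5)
The plan is to specialize the general Hopf structure of Theorem 4.3 (Section 4) to $\mathcal{A}_{ss}$, and then use the preceding Remark to identify the abstract coproduct $\Delta$ with $\Delta_{MR}$. First, $\mathcal{A}_{ss}$ satisfies the hypotheses of that theorem. It is concentrated in degree zero with $d=0$. It is connected, since $\mathcal{A}_{ss}(0)=\mathbb{K}$. It is multiplicative with $\mu=(12)$, because $\mu\circ_1\mu=\mu\circ_2\mu=(123)$ by the block-substitution rule recalled above. The unit relations $\mu\circ_1 1_0=1_{\mathcal{A}_{ss}}=\mu\circ_2 1_0$ are part of the relation set $\mathcal{R}$ of the presentation. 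Hence Theorem 4.3 yields a differential graded Hopf algebra $((\mathcal{A}_{ss},\delta),\odot,\eta,\Delta,\varepsilon)$. It then remains to identify each structure map with the combinatorial one.

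For the product, I would compute $\sigma\odot\tau=(\mu\circ_2\tau)\circ_1\sigma$ directly with the block rule. Since $\mu=(12)$, the first block has length $p$ and is relabelled by $\sigma$ on $\{1,\dots,p\}$, while the second block is relabelled by $\tau$ shifted by $p$. This gives the concatenation $\sigma\star\tau$ (shifted), which agrees with the Proposition of \cite{B-T} stated at the start of this section. For the coproduct, I would check that $s^{i,n-1}(\sigma)=st(\sigma(1)\cdots\widehat{\sigma(i)}\cdots\sigma(n))$. Iterating, $\widetilde{s}^{n-j}\sigma=st(\sigma_j^-)$ and $\widetilde{s}^{1,j}\sigma=st(\sigma_{j+1}^+)$, because standardization is idempotent under successive deletions. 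Therefore $\Delta=\Delta_{MR}$, as asserted in the Remark, and $\varepsilon=\varepsilon_{MR}$ by the counit Proposition just above.

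Compatibility of $\delta$ with $\odot$ comes from the bicomplex-algebra Proposition of \cite{B-T} recalled in this section. The coderivation property of $\delta$ with respect to $\Delta$, together with coassociativity, comes from the preceding Proposition on $(\mathcal{A}_{ss},\delta)$ as a coalgebra, which is itself a specialization of Proposition 4.2. The bialgebra axiom $\Delta\circ\odot=(\odot\otimes\odot)(Id\otimes T\otimes Id)(\Delta\otimes\Delta)$ is Proposition 4.1 applied to $\mathcal{A}_{ss}$.

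For the antipode, I would use the recursive formula of Theorem 4.3. Since $\mathcal{A}_{ss}$ is graded by arity and connected ($\mathcal{A}_{ss}(0)=\mathbb{K}$), it is a connected graded bialgebra. By the standard Takeuchi argument, $\mathcal{A}=\sum_{k\ge0}(\eta\varepsilon-I)^{\ast k}$ is a locally finite sum and is the convolution inverse of $I$. This gives existence of the antipode without checking the recursive formula by hand.

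The main obstacle I expect is the identification $\widetilde{s}^{1,j}\sigma=st(\sigma^+_{j+1})$, equivalently the claim that $\Delta=\Delta_{MR}$ in the Remark. Deleting the first position via $\circ_1 1_0$ must remove $\sigma(1)$ and standardize. I would prove this by induction on $j$, using that standardization commutes with deleting a letter. I would also double-check signs: $\Delta$ carries no signs while $\delta$ does. In the coderivation identity the Koszul sign $(-1)^j$ must be read with the arity grading, and I would verify it on $n=1,2$ before trusting the general statement.
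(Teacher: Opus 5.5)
You follow the same route as the paper, which obtains the statement by specializing Theorem 4.3 and then using the Remark that identifies $\Delta$ with $\Delta_{MR}$. Your identifications $\widetilde{s}^{n-j}\sigma=st(\sigma_j^-)$, $\widetilde{s}^{1,j}\sigma=st(\sigma_{j+1}^+)$ and $\sigma\odot\tau=\sigma\star\tau$ are correct. The gap is in the two compatibilities you import from Propositions 4.1 and 4.2: both are false on $\mathcal{A}_{ss}$, so the statement cannot be proved along these lines, or at all.

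\textbf{The bialgebra axiom fails.} Write $\mathrm{id}_n$ for the identity of $\Sigma_n$, so that $\Delta(\mathrm{id}_n)=\sum_{j=0}^{n}\mathrm{id}_j\otimes\mathrm{id}_{n-j}$ and $\mathrm{id}_1\star\mathrm{id}_1=\mathrm{id}_2$. Then
$\Delta(\mathrm{id}_1\star\mathrm{id}_1)=\mathrm{id}_0\otimes\mathrm{id}_2+\mathrm{id}_1\otimes\mathrm{id}_1+\mathrm{id}_2\otimes\mathrm{id}_0$, whereas
$(\star\otimes\star)(Id\otimes T\otimes Id)(\Delta\otimes\Delta)(\mathrm{id}_1\otimes\mathrm{id}_1)=\mathrm{id}_0\otimes\mathrm{id}_2+(1\pm1)\,\mathrm{id}_1\otimes\mathrm{id}_1+\mathrm{id}_2\otimes\mathrm{id}_0$.
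The sign depends on whether $T$ carries a Koszul sign, so the middle coefficient is $2$ or $0$, never $1$. This is the familiar failure of concatenation against deconcatenation.

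The proof of Proposition 4.1 breaks exactly where the single sum over $j$ is rewritten as the full double sum over $0\le i\le n$, $0\le t\le u$. The single sum has $n+u+1$ terms and only visits the pairs $(i,t)$ with $t=0$ or $i=n$. On $\mathcal{A}_{ss}$, what that computation actually yields is the unital infinitesimal relation $\Delta(x\star y)=\sum x_{(1)}\otimes(x_{(2)}\star y)+\sum(x\star y_{(1)})\otimes y_{(2)}-x\otimes y$, not the Hopf compatibility. The product that Malvenuto and Reutenauer pair with $\Delta_{MR}$ is the shifted shuffle, not $\star$. For the same reason, your Takeuchi argument only produces a convolution inverse of $I$, which exists for any connected coassociative coalgebra. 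It cannot supply the missing bialgebra axiom.

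\textbf{The differential part fails, independently of the product.} From the coface formulas, $\delta_0(\mathrm{id}_0)=\mathrm{id}_1-\mathrm{id}_1=0$. This is forced anyway by $\delta\circ\delta=0$, since $\delta_1(\mathrm{id}_1)=\mathrm{id}_2-\mathrm{id}_2+\mathrm{id}_2=\mathrm{id}_2$. Also $\delta_2(\mathrm{id}_2)=0$.
\begin{itemize}
\item At $\mathrm{id}_1$: $\Delta(\delta\,\mathrm{id}_1)=\Delta(\mathrm{id}_2)$ contains $\mathrm{id}_1\otimes\mathrm{id}_1$. But $(\delta\otimes id+id\otimes\delta)\Delta(\mathrm{id}_1)=\pm\mathrm{id}_0\otimes\mathrm{id}_2+\mathrm{id}_2\otimes\mathrm{id}_0$ has no such term, whatever Koszul signs you use.
\item At $\mathrm{id}_2$: $\Delta(\delta\,\mathrm{id}_2)=0$, but $(\delta\otimes id+id\otimes\delta)\Delta(\mathrm{id}_2)=\mathrm{id}_2\otimes\mathrm{id}_1\pm\mathrm{id}_1\otimes\mathrm{id}_2\neq 0$.
\end{itemize}
So $\delta$ is not a coderivation of $\Delta$. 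The check on $n=1,2$ that you planned is precisely where the argument collapses. The paper's proof of Proposition 4.2(iii) goes wrong when it applies the defining formula of $\Delta$ directly to the non-basis element $\delta\hat{x}$, which makes $\Delta$ quadratic rather than linear.

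Consequently the quintuple is not a differential graded Hopf algebra. Replacing $\star$ by the shifted shuffle recovers the Malvenuto--Reutenauer Hopf algebra, but only after discarding $\delta$.
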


\end{document}